\newcommand{\real}{{\mathbb R}}
\newcommand\sS{\mathcal{S}}
\newcommand\sE{\mathcal{E}}
\newcommand\sL{\mathcal{L}}
\newcommand{\integ}{{\mathbb Z}}
\newcommand{\eps}{{\varepsilon}}
\newcommand{\E}{{\mathcal{E}}}
\newcommand{\Exp}{{\mathbb{E}}}
\newcommand\oS{\bar{\sS}}
\newcommand\oE{\bar{\sE}}
\crefname{hypothesis}{Hypothesis}{Hypotheses}
\title{Homogenization of a Random Walk on a Graph in $\real^{\lowercase{d}}$: An
    approach to predict macroscale diffusivity in media with finescale
    obstructions and interactions\thanks{Submitted to the editors DATE.
}}
\author{Preston Donovan\thanks{Department of Mathematics and Statistics Mathematics, University of Maryland Baltimore County, Baltimore, MD
  (\email{pdonovan@umbc.edu}, \email{muruhan@umbc.edu}).}
\and Muruhan Rathinam\footnotemark[2]}
\begin{document}

\maketitle

\begin{abstract}
We propose random walks on suitably defined graphs as a framework for
finescale modeling of particle motion in an obstructed environment where the
particle may have interactions with the obstructions and the mean path length
of the particle may not be negligible in comparison to the finescale. This
motivates our study of a periodic, directed, and weighted graph embedded in
${\mathbb R}^d$ and the scaling limit of the associated continuous-time random
walk $Z(t)$ on the graph's nodes, which jumps along the graph's edges with
jump rates given by the edge weights. We show that the scaled process
$\varepsilon^2 Z(t/\varepsilon^2)$ converges to a linear drift $\bar{U}t$ 
and that $\varepsilon (Z(t/\varepsilon^2)-\bar{U}t/\varepsilon^2)$ converges weakly to a
Brownian motion. The diffusivity of the limiting Brownian motion can be
computed by solving a set of linear algebra problems. As we allow for jump
rates to be irreversible, our framework allows for the modeling of very
general forms of interactions such as attraction, repulsion, and bonding. 
The case of interest to us is that of null drift $\bar{U}=0$ and we provide some sufficient conditions for null drift that include certain symmetries of the graph. We also provide a formal asymptotic derivation of the effective diffusivity in analogy with homogenization theory for PDEs. For the case of reversible jump rates, we derive an equivalent variational formulation. This derivation involves developing notions of gradient for functions on the graph's nodes, divergence for ${\mathbb R}^d$-valued functions on the graph's edges, and a divergence theorem.
\end{abstract}

\begin{keywords}
Discrete homogenization, random walks on graphs.
\end{keywords}

\begin{AMS}
  60F17, 60J27.
\end{AMS}

\section{Introduction}
We consider a periodic, weighted, directed graph embedded in the Euclidean
space $\real^d$ and the associated random walk $Z(t)$ in continuous time $t
\geq 0$. The random walk takes place on the nodes (vertices) of the graph and
jumps along the edges with jump rates given by the edge weights.
The scaled process $\eps^2 Z(t/\eps^2)$ converges to a
deterministic linear motion $\bar{U} t$, 
and the centered and rescaled process $\eps (Z(t/\eps^2)- \bar{U}t/\eps^2)$
converges weakly to a Brownian motion. We show that the diffusivity of the
limiting Brownian motion can be computed by solving a set of linear algebra
problems related to the graph structure.

The motivation for our problem arises from modeling the random
motion of a particle in an environment with obstructions. The particle may
have some form of interaction (attraction, repulsion, or bonding) with the
obstructions and the mean path length of the random motion of the particle may
be non-negligible compared to the characteristic finescale length of the
environment. Such a situation arises for a solute in an aqueous polymer gel
where the solute interaction with solvent is simply captured by a random walk
with certain mean path length, and the polymer molecular network is considered
as forming a stationary obstruction structure. Moreover, the solute may have interactions with the
polymer molecules in the
form of bonding, attraction, or repulsion. Since the ``pore sizes'' in the
gels can be as small as $10$ nanometers or less and the ``roughness'' of the pore
boundaries may only be a few nanometers, the mean path length in water
of a solute may be non-negligible. This leads one to consider, as a starting
point, a finescale model involving a random walk with nonzero path length or
jump sizes.

As an illustrative example,  we
consider a point particle undergoing a random motion in 2D where a quarter of the
region is obstructed with the obstructions being periodically placed squares.
We are interested in the situation when the finescale $\eps$ characterized by the
periodic spacing is very small compared to the domain of interest.
Typically, the
finescale description of the particle will be a random walk with non-zero
mean path length $\ell>0$ and hence the ratio $h=\ell/\eps$ of the path length to the periodic length
 will be a relevant factor. If $h \ll 1$ then the
finescale model may be taken to be Brownian motion
with reflections off the obstructed quarter, and hence may be
described by the diffusion equation with no-flux boundary conditions. Standard
homogenization theory for PDEs allows one
to compute the effective diffusivity of the macroscale limit.

\begin{figure}
\centering
	\includegraphics[width=60mm]{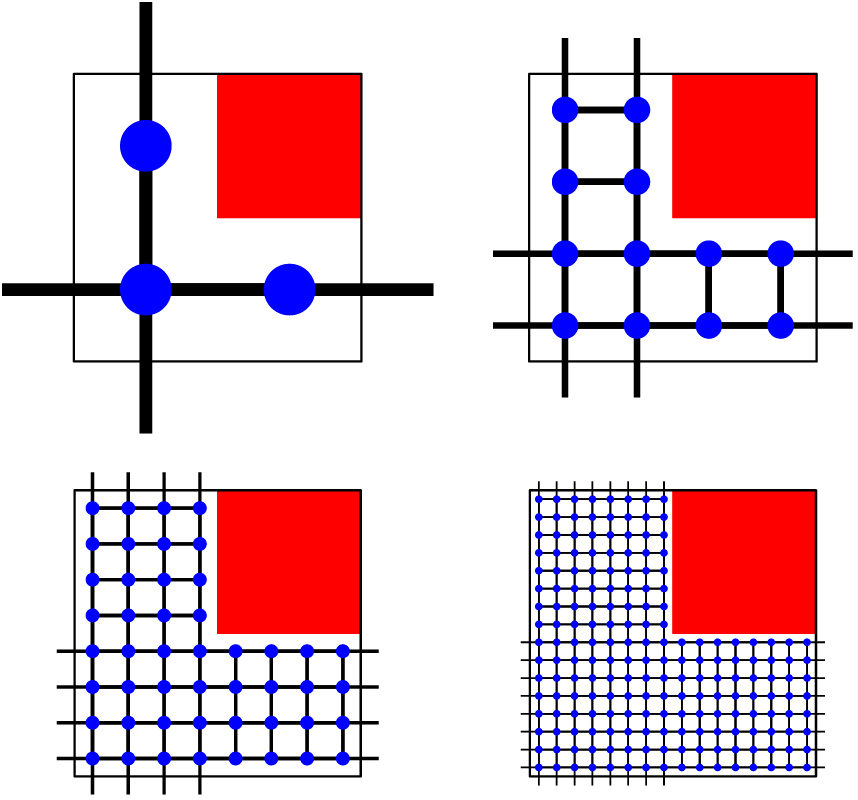}
	\caption{Periodic cells of graphs with nodes and edges defined in~\eqref{eqn:graph_sq_obs} for $h = 1/2,1/4,1/8$, and $1/16$. The circles represent edges, the thick lines represent edges, and the shaded region represents the ``obstructed'' region $\mathcal{O}$ where nodes are absent.}
\label{fig:graph-pd-cell}
\end{figure}

When $h$ is not negligible, the finescale model cannot be based on Brownian
motion. For instance, in the polymer gel example mentioned above, the solute
particle may be expected to move along linear 
trajectories which undergo sharp (non-differentiable) changes of direction due to collisions with the 
solvent molecules and or the polymer obstructions. Here the mean length of the
solute path between collisions with solvent molecules will be
captured by $h$. This suggests a finescale model that allows the particle to 
move in the region unoccupied by the stationary obstructions (polymer
molecules) with piecewise linear paths of i.i.d.\ length and direction. 
If a linear path encounters the boundary of the obstructed region, it 
will undergo specular reflection. Such a finescale model is continuous in 
space and even after homogenization one may expect to solve 
an infinite dimensional problem not unlike the unit-cell PDE 
one obtains in homogenization of PDEs.   

A computationally
tractable alternative to the continuous space model is 
to discretize the feasible spatial positions (nodes) and allow the
particle to jump (in continuous time) among those positions along certain ``edges''. The typical
spacing between the nodes of the edges will capture the mean path length $h$. See Figure
\ref{fig:graph-pd-cell} where the nodes are arranged in a Cartesian grid with
a maximum of four nearest neighbor edges where $h$ is the path length to
periodic length ratio with nodes being absent in the obstructed regions. If the jump rates are taken to be $1/h^2$, for a fixed
periodic length $\eps>0$ as $h \to 0$, the process limits to Brownian motion
with reflections. The theory developed in this paper
allows us to compute the limit of the process as $\eps \to 0$ for fixed $h$.
Figure \ref{fig:pathlengtheffects} drives home the message that the
effective diffusivity at the macroscale depends significantly on the ratio
$h$. Moreover, the $h \to 0$ limit of the macroscale diffusivity seems to
coincide with the homogenization of the Brownian motion with reflections.
If this commutativity (of discretization and homogenization) holds in general,
then the approach taken in this paper may also be used as a computational tool
to homogenize random motion described by Brownian motion with drift and
reflections since, after all, the unit-cell problem arising from the
homogenization of a PDE still needs to be solved by discretization.  However,
we do not pursue this question in this manuscript.

\begin{figure}
\centering
	\includegraphics[width=60mm]{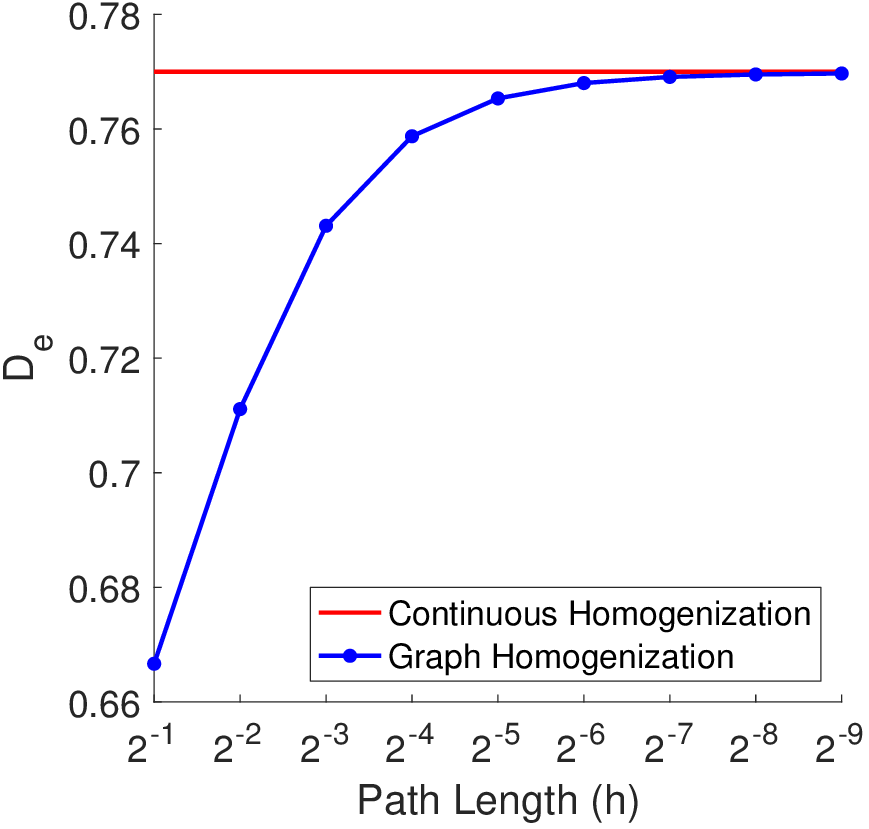}
	\caption{Effective diffusivity coefficients of the graphs depicted in Figure~\ref{fig:graph-pd-cell} and the effective diffusivity yielded by PDE homogenization. The effective diffusivity varies significantly for different path lengths ($h = 1/2, 1/4,\dots,1/512$) and appears to converge to the prediction of PDE homogenization.}
\label{fig:pathlengtheffects}
\end{figure}

In our example, the region of obstruction was fairly regular in the sense that
its boundary features (sidelength of the square) were not much smaller
than the periodic length. In many applications the obstructed region may be an elaborate shape with
boundary features that are of a scale much smaller than the periodic length,
and in these situations one needs to compare the mean path length with the
characteristic length of the boundary features. If the mean path length is not
much smaller in comparison, then a Brownian motion model (with drift, reflections
etc.\@) may not be a valid finescale model.

We also emphasize that in our approach, the spatial discretization can be
fairly arbitrary,
leading to fairly general graphs (subject to the periodicity assumption).
To the best of our knowledge, current literature on scaling limits of random
walks on graphs is limited to the special case of lattices $\integ^d$ where
edges connect the lattice's nodes with their $2d$ nearest
neighbors. Additionally, current literature assumes the jump rates to be reversible, meaning that the rate of jumping from $x$ to $y$ is equal to that of jumping from $y$ to $x$. We refer the reader to \cite{kunnemann1983diffusion, kipnis1986central, kozlov1987averaging} for the earliest results on this topic where the scaling limit of a random walk on a lattice with random conductances (assumed to be reversible) is considered. We note that the terminology ``conductance'' was used for jump rates. These works assumed nonzero conductances with a uniform lower bound. Later work \cite{berger2007quenched, faggionato2008random, mathieu2008quenched, biskup2011recent} studied the scaling limit of a random walk on a percolation lattice, where the conductances were allowed to be zero with a probability that ensures the existence of a unique infinite cluster. Additional related results can be found in \cite{owhadi2003approximation, gloria2011optimal, egloffe2014random, gloria2016quantitative}.

The reversible jump rates assumption does not allow one to capture fairly
general forms of interactions such as attraction, repulsion, and bond
formation mentioned above. In fact, the reversible jump rates assumption
implies that the generator of the Markov process describing the random
walk is symmetric
(self-adjoint). In the zero path length situation, that is, Brownian motion
with a drift vector field, self-adjointness does not hold for general
vector fields.  Our work does not assume reversible jump rates. Moreover, the graph structure considered is very general in that the set of nodes (vertices) is not confined to the lattice $\integ^d$ (or a subset of it) and the edges can be very general. On the other hand, while most of the aforementioned works consider either i.i.d.\ or stationary and ergodic jump rates, our work is restricted to the simpler periodic case. We believe that, by exploiting the ideas that connect the periodic case with the stationary and ergodic case \cite{owhadi2003approximation, gloria2016quantitative}, the results of this manuscript can be extended to the stationary and ergodic case.

Our approach to the convergence proofs uses the random time change
representation popularized by Kurtz and this framework has been
widely used to obtain several scaling limits of stochastic chemical reaction
networks \cite{ball2006asymptotic, kang2013separation, kang2014central}. We believe that exploitation of these ideas will enable future
results in homogenization of random walks on periodic graphs where the jump rates
exhibit multiple scales with jumps along some edges being slower compared to
the others.

From an application point of view, our previous work~\cite{donovan2016homogenization} demonstrated the feasibility of applying homogenization theory to predict solute motion in aqueous polymer solutions via comparison with experimental data. This work considered the simplest possible geometry of periodically placed spherical obstacles with the diffusion equation. To properly model polymer gels, more complex geometries and interactions with the solute are needed. We believe that the discrete graph approach provides a simple way to deal with complex obstruction geometries as well as interactions.

The paper is organized as follows. Details of the graph structure and its
induced random walk are outlined in Section~\ref{sect:preliminaries}.
In Section~\ref{sect:rigorous-derivation} we prove
that as $\eps \to 0$, $\eps^2 Z(t/\eps^2)$ converges to a deterministic
linear motion $\bar{U} t$ and $\eps^{-1} (\eps^2 Z(t/\eps^2) - \bar{U}t)$
converges
weakly to a Brownian motion whose diffusivity can be computed in terms of the
graph characteristics. We also show that certain types of modifications to $Z$
such as interpolation between jump times does not alter the above limiting
behavior.  
 Our main interest in applications is the situation
when $\bar{U}=0$. Section~\ref{sect:solvability} provides some
sufficient conditions under which $\bar{U}=0$.

In Section~\ref{sect:formal-derivation} we also provide a formal approach
to deriving the Brownian limit and the effective diffusivity via an asymptotic
expansion. This method mirrors the developments in PDE homogenization theory
and leads to the so-called ``unit-cell'' problem.
In PDE homogenization theory, when the operator is self-adjoint, the unit-cell
problem may also be formulated as
a variational problem where an energy functional is minimized.
Section~\ref{sect:variational-formulation} derives an equivalent variational
formulation that holds when the jump rates are reversible. This result
generalizes the existing results in the literature to graphs with nodes that
are not necessarily subsets of the integer lattice $\integ^d$.
We develop a notion of {\em gradient} and {\em divergence} for graph functions
and prove a {\em divergence theorem} in order to derive our variational
formulation of the unit-cell problem. We note that a reader uninterested
in connections with the PDE homogenization theory may skip Sections
\ref{sect:formal-derivation} and \ref{sect:variational-formulation}.

In Section~\ref{sect:numerics} we use the same planar obstruction 
geometry shown in Figure~\ref{fig:graph-pd-cell} to numerically 
illustrate the effects of path length variation as well interactions of the random
walker with obstructions (attraction, repulsion, or bonding) on the
effective diffusivity. We also compare via Monte Carlo simulations the effects
of spatial discretization to obtain a discrete set of nodes as required by our
homogenization theory. Appendix A confirms that the effective diffusivity
predicted by the formal asymptotics approach is correct.

\section{Mathematical preliminaries}\label{sect:preliminaries}
\subsection{Directed graph}
Define a \emph{weighted, directed graph} to be a triple $(\sS,\sE,\lambda)$,
where $\sS \subset \real^d$ is a countable set of \emph{nodes}, $\sE \subset
\sS \times \sS$ is the \emph{directed edge set}, and $\lambda:\sE \rightarrow
(0,\infty)$ is a positive edge function. We will use the term \emph{graph} and
\emph{weighted, directed graph} interchangeably. A directed graph is
\emph{strongly connected} if each node can reach every other node via
traversing edges in the direction they point. We note that our definition
does not allow more than one directed edge from a node $x$ to another node
$y$.

{\bf Assumption 1:} The graph $(\sS,\sE,\lambda)$ is strongly connected,
does not have self edges (edges of the form $(y,y)$), is invariant under integer translations, and the node set $\sS \subset \real^d$ is countable.

Invariance under integer vector translations means for all $n \in \integ^d$,
\begin{enumerate}
  \item If $x \in \sS$ then $x + n \in \sS$.
  \item If $(x,y) \in \sE$ then $(x + n, y + n) \in \sE$.
  \item If $(x,y) \in \sE$ then $\lambda(x+n,y+n) = \lambda(x,y)$.
\end{enumerate}
We sometimes write $\lambda(e)$ as $\lambda_e$. For an edge $e=(x,y) \in \sE$, denote the \emph{originating node} $\partial_-e = x$ and the \emph{terminal node} $\partial_+e = y$. Given a node $y \in \sS$, let $\sE_y$ denote the subset of edges that originate in $y$ and conversely let $\sE'_y$ denote the subset of edges that terminate in $y$. That is,
\begin{equation*}
\sE_y = \{e \in \sE : \partial_-e = y\} \text{ and }
\sE'_y = \{e \in \sE : \partial_+e = y\}.
\end{equation*}

\subsection{Random walk on the graph}
A graph $(\sS,\sE,\lambda)$ satisfying Assumption 1 induces a Markov process
$Z(t)$ in continuous time taking values in $\sS$ with \emph{intensity} or
\emph{jump rate} from $x \in \sS$ to $y \in \sS$ given by $\lambda(x,y)$. 
This means that, conditioned on $Z(t)=x$, the probability that the process
jumps from $x$ to $y$ during $(t,t+h]$  
is $\lambda(x,y) h + o(h)$ as $h \to 0+$.  It also follows from Markov process 
theory that the probability that more than one jump occurs during $(0,h]$ is $o(h)$
as $h \to 0+$. 
The \emph{generator} $\mathcal{L}_Z$ of $Z$ is an operator on a subset 
of $\real^{\sS}$ which may be regarded as the conditional 
rate of change
\[
(\mathcal{L}_Z f)(y) = \lim_{h \to 0+} \Exp(f(Z(t+h)) - f(Z(t)) \, | \, Z(t)=y)/h, 
\]
for those bounded functions $f \in \real^{\sS}$ for which this limit exists. 
The generator of $Z$ is thus given by
\begin{equation*}\label{eq_genZ}
(\mathcal{L}_Z f)(y) = \sum_{(y,z) \in \sE_y} (f(z) - f(y)) \lambda(y,z).
\end{equation*}
If $\sE_y$ is finite for all $y$, this is defined for all
bounded functions $f \in \real^{\sS}$. See \cite{bremaud2013markov} for a
reference. 

Introduce the following equivalence relation on $\sS$: two nodes $x,y \in \sS$ are equivalent if and only if $x-y \in \integ^d$. Let $\oS$ denote the set of equivalent classes of $\sS$. We shall identify $\oS= \sS \cap [0,1)^d$. By Assumption 1, $\oS$ is strongly connected. Let $\Pi:\sS \rightarrow \oS$ be the natural projection that maps an element to its equivalent class.

Introduce the following equivalence relation on $\sE$: two edges
$(x,x'),(y,y') \in \sE$ are equivalent if and only if there exists $n \in
\integ^d$ such that $x-y=x'-y'=n$. Let $\oE$ denote the set of equivalence
classes of $\sE$. Note that $\oE$ may not be regarded as a subset of $\oS
\times \oS$ and that we may identify $\oE = \cup_{y \in \oS} \sE_y$. By
Assumption 1, if $e_1,e_2 \in \sE$ belong to the same equivalence class then
$\lambda(e_1) = \lambda(e_2)$.

{\bf Assumption 2:} The node set $\oS$ and edge set $\oE$ contain finitely many elements.

Thus, $\sS$ and $\sE$ are countable.
For any edge function $g\in (\real^d)^{\sE}$ that is periodic, that is, satisfies $g(e+(n,n))=g(e)$
for all $e \in \sE$ and $n \in \real^d$,  we have the following useful identities:
\begin{equation}\label{eqn:sum-of-sums}
\sum_{e \in \oE} g(e) = \sum_{y \in \oS} \sum_{e \in \sE_y} g(e) = \sum_{y \in \oS} \sum_{e \in \sE'_y} g(e).
\end{equation}
By Assumption 2, these summations are well defined.

By Assumptions 1 and 2, the projected process
\begin{equation*}
Y(t) = \Pi Z(t)
\end{equation*}
is a Markov process with the finite state space $\oS$ that jumps along edges
in $\sE_{\Pi} \subset \oS \times \oS$ where $(y,z) \in \sE_{\Pi}$ if and
only if there exists $(y',z') \in \sE$ such that $\Pi(y')=y$ and $\Pi(z')=z$.
The jump rate is given by $\bar{\lambda}: \sE_{\Pi} \rightarrow (0,\infty)$ where
\begin{equation*}
\bar{\lambda}(x,y) = \sum_{\substack{(x,z) \in \sE_x\\ \pi(z) = y}} \lambda(x,z)
\end{equation*}
for $(x,y) \in \sE_{\Pi}$.
The relationship between $\oE$ and $\sE_{\Pi}$ is important to note.
Let the map $\kappa:\oE \to \sE_\Pi$ be defined as follows.
Given $e \in \oE$, let $(y',z') \in \sE$ be any edge in the equivalence class
$e$. Set $\kappa(e) = (\Pi(y'),\Pi(z')) \in \sE_\Pi$. This is well defined,
and is a surjection. For most of the applications we are
interested in, $\kappa$ is a bijection, but we do not need this assumption
for most of the results in this paper.


For each edge $e=(x,y) \in \sE$, we denote the \emph{jump size} by $\nu_e = y-x$. Hence, using the random time change representation~\cite{ethier2005markov} we can write
\begin{equation}\label{eqn:z}
Z(t) = z_0 + \sum_{e \in \oE}  \nu_e R_e\left( \lambda_e \int_0^t 1_{\{\partial_-e\}}(Y(s)) ds \right),
\end{equation}
where $\{R_e\}_{e \in \oE}$ is a collection of independent unit-rate Poisson
processes and we have used the identification $\oE = \cup_{y \in \oS} \sE_y$.
We shall take $z_0=0$ without loss of generality throughout the
rest of this paper.

Define the \emph{rate matrix} $L:\oS \times \oS \rightarrow \real$ of $(\sS,\sE,\lambda)$ by
\begin{equation}\label{eqn:rate-matrix}
\begin{aligned}
L(x,y) =
\begin{cases}
      \bar{\lambda}(x,y) & x \neq y\\
      -\sum_{y \in \oS \backslash \{x\}} L(x,y) & x = y \\
		0 				& \text{otherwise.}
\end{cases}
\end{aligned}
\end{equation}
We may regard $L$ as a linear map $\real^{\oS} \rightarrow \real^{\oS}$ where
\begin{equation}\label{eqn:L-op}
(Lf)(y) = \sum_{e \in \sE_y} (f(y+\nu_e)-f(y)) \lambda_e,
\end{equation}
for $f \in \real^{\oS}$. In this view, $L$ is the generator of the process $Y$ and we note that $y+\nu_e$ is calculated modulo $1$.

For $y \in \oS$ define
\begin{equation}
\lambda^0(y) = \sum_{e \in \sE_y} \lambda_e.
\end{equation}
The transpose of $L$, given by $L^T(y,z) = L(z,y)$, can also be regarded as a linear operator on $\real^{\oS}$ where
\begin{equation}\label{eqn:Lt-op}
(L^Tf)(y) = \sum_{e \in \sE_y'} f(y-\nu_e) \lambda_e - f(y) \lambda^0(y),
\end{equation}
for $f \in \real^{\oS}$, and as before $y-\nu_e$ is computed modulo $1$. Throughout this paper we typically regard $L$ and $L^T$ as linear operators rather than functions on $\oS \times \oS$. The appropriate interpretation will be obvious.

Standard Markov process theory shows that $Y$ is an ergodic process because
$\oS$ is finite and $(\oS,\sE_{\Pi})$ is strongly connected (which follows
from Assumption 1). Hence, $L^T$ has
a one-dimensional null space that contains a function $\pi \in \real^{\oS}$
such that $\pi(y) > 0$ for all $y \in \oS$ and $\sum_{y \in \oS} \pi(y)
=1$. In other words, $\pi$ is the unique stationary probability measure of
process $Y$.

\subsection{A law of large number and central limit argument}
Here we briefly sketch a line of reasoning that makes clear why the suitably scaled process
has a deterministic limit via law of large numbers and a central limit
correction. Taking $z_0=0 \in \sS$ as the initial node and $y_0=0 \in \oS$ being its
projection, let $T_0=0$ and denote by $T_n$ for $n=1,2,\dots$ the successive return times
to $y_0=0$ of the projected process $Y$. Due to the Markov property,
$(T_n-T_{n-1},Z(T_n)-Z(T_{n-1}))$ for $n=1,2,\dots$ form an i.i.d.\ sequence.
Thus, the law of large number and the central limit results are applicable
to this sequence provided appropriate integrability. Suppose that the expected value of the increment of
the process $Z$ between successive revisits of $Y$ is zero,
that is, $\mathbb{E}(Z(T_n)-Z(T_{n-1}))=0$ or alternatively we centralize $Z$ by subtracting the
drift. Then, in light of Theorem 14.4 from
\cite{billingsley2011convergence}, one could expect that the centralized and
suitably scaled $Z$ converges to
a $d$-dimensional Brownian motion in $D^d[0,\infty)$ if certain integrability
  conditions can be verified. However, we are not
merely interested in knowing that the centralized process converges to
a Brownian motion. We want to find a way to compute the diffusivity matrix
of the Brownian limit. In order to do this, one needs to
compute the covariance of the increment $Z(T_n)-Z(T_{n-1})$. In the graph
setting, this involves considering the possible paths of $Z$ that result
in revisits of $Y$ to the original state. Direct determination of this
seems a harder task than the approach we take in this paper.

Our approach to finding the limits involves the use of random time change
representation, functional law of large numbers for Poisson processes, and
the martingale central limit theorem. At first
glance, our approach may appear as using a powerful weapon
on a simple i.i.d.\ sum type problem. However, the computations involved are simpler
and do not involve enumerating all possible return paths as required by
the direct approach. Moreover, the framework of using random time change
representation will allow us, in future work, to consider separation of time scales
in the jump process.

\section{Scaling limits of the random walk}\label{sect:rigorous-derivation}
Consider a weighted, directed graph \\ $(\sS,\sE,\lambda)$ satisfying assumptions in Section~\ref{sect:preliminaries}. Recall the random time change representation of $Z$ (\ref{eqn:z}):
\begin{equation*}
Z(t) = z_0 + \sum_{e \in \oE} \nu_e R_e\left( \lambda_e \int_0^t 1_{\{\partial_-e\}}(Y(s)) ds \right).
\end{equation*}
We assume that the processes $Z, R_e$, and hence $Y$ are all
carried by a probability space $(\Omega,\mathcal{F},\mathbb{P})$. 
We also suppose that $R_e$ and hence $Z$ and $Y$ are {\em cadlag} (that is have right continuous 
paths with left hand limits).  
For the remainder of this discussion, we assume without loss of generality
$z_0 = 0$.

It is instructive to define the \emph{drift field} $\rho:\oS \to \real^d$ by
\begin{equation}\label{eqn:drift-field}
\rho(y) = \sum_{e \in \sE_y} \nu_e \lambda_e.
\end{equation}
Note that $\rho(y)$ is the ``instantaneous drift rate'':
\begin{equation}
\lim_{t \to 0+} \frac{1}{t} \mathbb{E}(Z(t_0 + t)-z \, | \, Z(t_0)=z)
= \sum_{e \in \sE_z} \lim_{t \to 0+} \frac{\nu_e \lambda_e t + o(t)}{t} =
\sum_{e \in \sE_z} \nu_e \lambda_e = \rho(\Pi(z)).
\end{equation}
Intuitively, we expect that the scaled process $\eps^2 Z(t/\eps^2)$
converges (as $\eps \to 0$) almost surely to the
deterministic linear motion given by
\[
t \sum_{y \in \oS} \rho(y) \pi(y) = t \sum_{e \in \oE} \nu_e \lambda_e
\pi(\partial_- e),
\]
because, in the long run, the fraction of time spent by the projected process
$Y$ on $y \in \oS$ is equal to $\pi(y)$. This limit is shown in Section
\ref{sec-drift-lim} using the functional law of large numbers for Poisson
processes and the ergodicity of $Y$. Thus, we shall refer to $\bar{U}$ defined by
\begin{equation}\label{eq-Ubar}
\bar{U} = \sum_{y \in \oS} \rho(y) \pi(y)
\end{equation}
as the {\em long run drift rate}.

For convenience we define the centered Poisson processes
\begin{equation}
	\tilde{R}_e(t) = R_e(t) - t.
\end{equation}

We also define processes $S^\eps_e$ and $M^\eps_e$ as follows:
\begin{equation}\label{eqn:S_e}
S_e^{\eps}(t) = \lambda_e \int_0^{t/\eps^2} 1_{\{\partial_-e\}}(Y(s)) ds
\end{equation}
and
\begin{equation}\label{eqn:M_e}
M^{\eps}_e(t) = \tilde{R}_e(S_e^{\eps}(t)).
\end{equation}

\subsection{Pure drift limit of $\eps^2 Z(t/\eps^2)$}\label{sec-drift-lim}
Consider the process $U_\eps$ defined by $U_\eps(t) = \eps^2
Z(t/\eps^2)$. We may write
\begin{equation} \label{eqn:W}
U_{\eps}(t) = \eps^2 \sum_{e \in \oE} \nu_e M^{\eps}_e(t) + \eps^2\sum_{e \in \oE}\nu_e S_e^{\eps}(t).
\end{equation}
We show that as $\eps \rightarrow 0$, $U_{\eps}(t) \rightarrow t \bar{U}$ almost surely and uniformly on compact subintervals
$[t_0,T]$ of time that exclude $0$.

\begin{lemma} \label{lem:epsM_lim}
For all $T > 0$,
\begin{equation*}
\lim_{\eps \rightarrow 0}\sup_{t \in [0,T]} \eps^2 |M^{\eps}_e(t)| = 0.
\end{equation*}
\end{lemma}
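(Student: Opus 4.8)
The plan is to combine two ingredients: the random time change $S^\eps_e$ that sits inside the centered Poisson process becomes, after the $\eps^2$ rescaling, uniformly bounded by a deterministic constant on $[0,T]$; and the functional strong law of large numbers for a unit-rate Poisson process, in the form $\eps^2 \tilde R_e(\cdot/\eps^2) \to 0$ almost surely, uniformly on compact subsets of $[0,\infty)$.

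First I would observe that since $0 \le 1_{\{\partial_-e\}}(Y(s)) \le 1$, the rescaled time change
\[
\sigma^\eps_e(t) \coloneqq \eps^2 S^\eps_e(t) = \lambda_e \eps^2 \int_0^{t/\eps^2} 1_{\{\partial_-e\}}(Y(s))\,ds
\]
satisfies $0 \le \sigma^\eps_e(t) \le \lambda_e t \le \lambda_e T$ for every $t \in [0,T]$ and every $\eps > 0$. Writing $\phi_\eps(u) \coloneqq \eps^2 \tilde R_e(u/\eps^2)$, we have the pointwise-in-$t$ identity $\eps^2 M^\eps_e(t) = \eps^2 \tilde R_e(S^\eps_e(t)) = \phi_\eps(\sigma^\eps_e(t))$, and hence
\[
\sup_{t \in [0,T]} \eps^2 |M^\eps_e(t)| \le \sup_{u \in [0,\lambda_e T]} |\phi_\eps(u)|.
\]
This is the crucial reduction: the left-hand side, built from a composition with a random time, is dominated by the oscillation of $\phi_\eps$ over a \emph{fixed deterministic} compact interval.

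Next I would invoke the functional law of large numbers for the Poisson process $R_e$: almost surely $\eps^2 R_e(u/\eps^2) \to u$ uniformly for $u$ in compact sets, so $\phi_\eps(u) = \eps^2 R_e(u/\eps^2) - u \to 0$ uniformly on $[0,\lambda_e T]$, which together with the previous display gives the claim. If a self-contained argument is preferred, the uniform convergence of $\phi_\eps$ follows from the ordinary strong law $R_e(s)/s \to 1$ together with the monotonicity of $s \mapsto R_e(s)$: given $\delta > 0$, pick a (random, a.s.\ finite) $s_0$ with $|\tilde R_e(s)| \le \delta s$ for $s \ge s_0$; then $\sup_{0 \le s \le N}|\tilde R_e(s)| \le \max\bigl(R_e(s_0),\, \delta N\bigr)$, so $N^{-1}\sup_{0 \le s \le N}|\tilde R_e(s)| \le \delta$ for $N$ large, and letting $N = \lambda_e T/\eps^2 \to \infty$ and then $\delta \downarrow 0$ finishes it.

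The only step that requires any care is the passage from control of $\tilde R_e$ itself to control of the composed, time-changed quantity $\tilde R_e(S^\eps_e(t))$ uniformly over $t \in [0,T]$; the deterministic bound $\sigma^\eps_e(t) \le \lambda_e T$ is exactly what buys this, reducing matters to uniform (rather than merely pointwise) control of $\phi_\eps$ on a fixed compact set. Everything else is a routine application of the law of large numbers.
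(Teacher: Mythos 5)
Your proposal is correct and follows essentially the same route as the paper's proof: bound $\eps^2 S^\eps_e(t)$ by the deterministic constant $\lambda_e T$, reduce the supremum over the random time change to a supremum of $\eps^2 R_e(u/\eps^2)-u$ over the fixed compact $[0,\lambda_e T]$, and conclude by the functional law of large numbers for Poisson processes. The self-contained derivation of the FLLN from the ordinary strong law plus monotonicity is a nice extra but not needed, since the paper simply cites the standard result.
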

\begin{proof}
Notice that $0 \leq \eps^2 S_e^{\eps}(t) \leq \lambda_e t$  for all $t$. Then
\begin{equation*}
\begin{aligned}
\sup_{t \in [0,T]} |\eps^2 M^{\eps}_e(t)|
	= \sup_{t \in [0,T]} |\eps^2 \tilde{R}_e(S_e^{\eps}(t))|
	= \sup_{t \in [0,T]} |\eps^2 R_e(S_e^{\eps}(t)) - \eps^2 S_e^{\eps}(t)| \\
	= \sup_{u \in [0,\eps^2 S_e^{\eps}(T)]} |\eps^2 R_e\Big(\frac{u}{\eps^2}\Big) - u|
	\leq \sup_{u \in [0,\lambda_e T]} |\eps^2 R_e\Big(\frac{u}{\eps^2}\Big) - u|
	\rightarrow 0 \text{ a.s.\ as } \eps \rightarrow 0,
\end{aligned}
\end{equation*}
where the limit follows from the functional law of large numbers for Poisson processes~\cite{ethier2005markov}.
\end{proof}
\begin{lemma}\label{lem:w-eps}
For fixed $0 < t_0 <T <\infty$,
\begin{equation*}
\lim_{\eps \rightarrow 0} \sup_{t \in [t_0,T]} |U_{\eps}(t) - t \bar{U}| =0.
\end{equation*}
\end{lemma}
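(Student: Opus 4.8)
The plan is to use the decomposition $U_\eps(t) = \eps^2\sum_{e\in\oE}\nu_e M_e^\eps(t) + \eps^2\sum_{e\in\oE}\nu_e S_e^\eps(t)$ and handle the two sums separately. The martingale-type sum is already under control: since $\oE$ is finite by Assumption~2, Lemma~\ref{lem:epsM_lim} gives $\sup_{t\in[0,T]}\bigl|\eps^2\sum_{e\in\oE}\nu_e M_e^\eps(t)\bigr| \le \sum_{e\in\oE}|\nu_e|\,\sup_{t\in[0,T]}\eps^2|M_e^\eps(t)| \to 0$ almost surely. So the whole statement reduces to showing $\sup_{t\in[t_0,T]}\bigl|\eps^2\sum_{e\in\oE}\nu_e S_e^\eps(t) - t\bar{U}\bigr|\to 0$ a.s., where I recall $\bar{U} = \sum_{y\in\oS}\rho(y)\pi(y) = \sum_{e\in\oE}\nu_e\lambda_e\pi(\partial_-e)$.

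Since the jump vectors $\nu_e$ may point in different directions, $\eps^2\sum_e\nu_e S_e^\eps(t)$ need not be monotone in $t$, but each scalar summand is, so I would argue one edge at a time. Fix $e\in\oE$ and set $g_\eps(t)\coloneqq\eps^2 S_e^\eps(t) = \lambda_e\,\eps^2\int_0^{t/\eps^2}1_{\{\partial_-e\}}(Y(s))\,ds$. For fixed $t>0$, putting $u = t/\eps^2\to\infty$ we have $g_\eps(t) = \lambda_e t\cdot\frac1u\int_0^u 1_{\{\partial_-e\}}(Y(s))\,ds$, and the ergodic theorem for the finite, strongly connected chain $Y$ (cf.\ Section~\ref{sect:preliminaries}) gives $\frac1u\int_0^u 1_{\{\partial_-e\}}(Y(s))\,ds\to\pi(\partial_-e)$ almost surely; at $t=0$ the statement is trivial. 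Hence, on one almost sure event, $g_\eps(t)\to g(t)\coloneqq\lambda_e\pi(\partial_-e)\,t$ for every $t\ge 0$.

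The remaining, and only nonroutine, step is to upgrade this pointwise-in-$t$ convergence to uniform convergence on $[t_0,T]$. This uses the classical fact that a sequence of monotone functions converging pointwise on a compact interval to a continuous limit converges uniformly there: each $g_\eps$ is nondecreasing (the integrand is nonnegative) and $g$ is continuous (in fact linear), so by partitioning $[0,T]$ finely and sandwiching $g_\eps(t)$ between the values of $g_\eps$ at the two adjacent partition points one obtains $\sup_{t\in[0,T]}|g_\eps(t)-g(t)|\to 0$ a.s. (in particular on $[t_0,T]$). Summing these finitely many uniform limits against the fixed vectors $\nu_e$ gives $\sup_{t\in[t_0,T]}\bigl|\eps^2\sum_{e\in\oE}\nu_e S_e^\eps(t) - t\bar{U}\bigr|\to 0$ a.s., and combining with the martingale estimate above yields the lemma. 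I expect this monotone-to-uniform passage to be the main point: the ergodic theorem by itself only controls $\eps^2 S_e^\eps(t)$ at each fixed time, and it is monotonicity in $t$ together with continuity of the linear limit --- rather than any quantitative ergodic rate --- that closes the gap. Everything else (finiteness of $\oE$, the triangle inequality over edges, the reduction via Lemma~\ref{lem:epsM_lim}) is bookkeeping.
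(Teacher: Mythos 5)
Your proof is correct, and it follows the paper's overall strategy — use Lemma~\ref{lem:epsM_lim} to dispose of the martingale sum, then show $\sup_{t\in[t_0,T]}|\eps^2 S_e^\eps(t)-t\lambda_e\pi(\partial_-e)|\to 0$ edge by edge via the ergodic theorem for $Y$ — but the mechanism you use for the uniformity in $t$ is genuinely different. The paper gets uniformity directly from the ergodic limit itself: almost surely, for every $\delta>0$ there is a $T_\delta$ such that the time average $\frac{1}{\tilde T}\int_0^{\tilde T}1_{\{\partial_-e\}}(Y(s))\,ds$ is within $\delta$ of $\pi(\partial_-e)$ for \emph{all} $\tilde T\ge T_\delta$; since $t\ge t_0$ forces $t/\eps^2\ge t_0/\eps^2\ge T_\delta$ once $\eps<\sqrt{t_0/T_\delta}$, the bound $|\eps^2 S_e^\eps(t)-t\lambda_e\pi(\partial_-e)|\le T\lambda_e\,|\text{average}-\pi(\partial_-e)|$ is already uniform over $[t_0,T]$, with no monotonicity argument needed. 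This is exactly where the hypothesis $t_0>0$ enters. You instead extract only the pointwise-in-$t$ limit from ergodicity and then invoke the P\'olya/Dini-type fact that monotone functions converging pointwise to a continuous limit on a compact interval converge uniformly. That step is valid (each $\eps^2S_e^\eps$ is nondecreasing and the limit is linear), and it buys you slightly more — uniform convergence on all of $[0,T]$, so the restriction to $t_0>0$ becomes unnecessary — at the cost of an extra partitioning argument that the paper's one-line sup bound avoids. Your characterization of the monotone-to-uniform passage as ``the main point'' is where you diverge from the paper: in the paper's proof there is no such gap to close, because the almost-sure ergodic convergence is already a statement uniform over all sufficiently large averaging windows.
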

\begin{proof}
In light of \eqref{eq-Ubar} and Lemma~\ref{lem:epsM_lim} it is adequate to show that
\[
\lim_{\eps \to 0} \sup_{t \in [t_0,T]} |\eps^2 S^\eps_e(t) - t \lambda_e
\pi(\partial_- e)| =0.
\]
First we obtain the (almost sure) upper bound
\[
\sup_{t \in [t_0,T]} |\eps^2 S_e^{\eps}(t) - t \lambda_e \pi(\partial_- e)|
\leq T \lambda_e \left| \frac{1}{t/\eps^2} \int_0^{t/\eps^2}
1_{\{\partial_-e\}}(Y(s)) ds - \pi(\partial_- e)\right|.
\]
By the ergodicity of $Y$, given $\delta>0$ there exists $T_\delta>0$ such that
for all $\tilde{T} \geq T_\delta$,
\[
\left| \frac{1}{\tilde{T}} \int_0^{\tilde{T}}
1_{\{\partial_-e\}}(Y(s)) ds - \pi(\partial_- e)\right|<\delta.
\]
Then for all $0<\eps<\sqrt{t_0/T_\delta}$  we see that
\[
\sup_{t \in [t_0,T]} \left| \frac{1}{t/\eps^2} \int_0^{t/\eps^2}
1_{\{\partial_-e\}}(Y(s)) ds - \pi(\partial_- e)\right| < \delta.
\]
The result follows from this.
\end{proof}
The following important lemma is repeatedly used in the next subsection.
\begin{lemma}\label{lem-ReSe-limit}
For each $e \in \oE$ and $t \geq 0$
\[
\lim_{\eps \to 0} \eps^2 R_e(S^\eps_e(t)) = t \lambda_e \pi(\partial_- e).
\]
\end{lemma}
\begin{proof} The proof follows from Lemmas \ref{lem:epsM_lim} and \ref{lem:w-eps}.
\end{proof} 

\subsection{Limiting behavior of the centered and rescaled process $Z_{\eps}$}
The most interesting scenario for us is when the long run drift rate $\bar{U}$ is
zero. We shall refer to this as the {\em null drift condition}:
\begin{equation}\label{eq:null-drift}
\bar{U} = \sum_{y \in \oS} \rho(y) \pi(y) = \sum_{e \in \oE} \nu_e \lambda_e
\pi(\partial_- e) =0.
\end{equation}
Nevertheless, we shall consider the general case $\bar{U} \neq 0$, 
 and define the centered and rescaled process $Z_\eps(t)$
\begin{equation}\label{eqn:Zeps}
Z_\eps(t) = \eps^{-1} (U_\eps(t) - t \bar{U}) = \eps(Z(t/\eps^2)-t \bar{U}/\eps^2).
\end{equation}

We define the {\em centered drift field} $\tilde{\rho}$ by
\begin{equation}\label{eqn:tilderho}
\tilde{\rho}(y) = \rho(y) - \bar{U}.
\end{equation} 
We also define $\psi \in (\real^d)^{\oS}$ by
\begin{equation}\label{eqn:Lg_fd}
L\psi = \tilde{\rho}.
\end{equation}
The following observation 
ensures that such a $\psi$ exists and we note that $\psi$ is unique only up to an
additive constant. 
Recall that $L^T$ has a one-dimensional null space by ergodicity of $Y$. Hence
we have the following equivalent statements:
\begin{equation*}
\tilde{\rho} \in R(L) \iff \tilde{\rho} \perp N(L^T)
\iff \sum_{y \in \oS} \tilde{\rho}(y) \pi(y) = 0 \iff \sum_{y \in \oS} \rho(y) \pi(y) = \bar{U}.
\end{equation*}

Now we state our main result.
\begin{theorem}\label{thm:deff_rigorous}
The process $Z_{\eps}$ defined by \eqref{eqn:Zeps} converges weakly
\begin{equation*}
Z_{\eps} \Rightarrow {\bf Z} \text{ in } D^{d}[0,\infty) \text{ as } \eps \rightarrow 0,
\end{equation*}
where ${\bf Z}$ is a Brownian motion with
\begin{equation*}
\begin{aligned}
\Exp {\bf Z}(t) &= 0, \\
\Exp [{\bf Z}(t){\bf Z}(t)^T] &= 2 C t.
\end{aligned}
\end{equation*}
Here,
\begin{equation} \label{eqn:C}
\begin{aligned}
C = \frac{1}{2} \sum_{e \in \oE} \alpha_e \alpha_e^T \lambda_e \pi(\partial_-e)
\end{aligned}
\end{equation}
where
\begin{equation}\label{eqn:alpha}
\alpha_e = \nu_e - (\psi(\partial_+e) - \psi(\partial_-e)),
\end{equation}
and $\psi$ satisfies~(\ref{eqn:Lg_fd}).
\end{theorem}

The proof of Theorem \ref{thm:deff_rigorous} will be be presented after we establish some lemmas. 

An immediate consequence of Theorem \ref{thm:deff_rigorous} is the following
lemma. 
\begin{lemma}
The diffusivity matrix $C$~(\ref{eqn:C}), which is symmetric positive semi-definite,
is symmetric positive definite if and only if $\{\alpha_e\}_{e\in \oE}$~(\ref{eqn:alpha}) spans $\real^d$ .
\end{lemma}
\begin{proof}
The result is clear since for $x \in \real^d$,
\begin{equation*}
x^T C x = \sum_{e \in \oE} x^T \alpha_e \alpha_e^T x\lambda_e \pi(\partial_-e) = \sum_{e \in \oE} |x^T \alpha_e|^2 \lambda_e \pi(\partial_-e).
\end{equation*}
\end{proof}

Our approach for showing the limit of $Z_\eps$ is to use
the martingale functional central limit theorem \cite{ethier2005markov}.
The key idea is related to
\cite{bhattacharya1982functional}, but we follow the ideas articulated concisely in \cite{kang2014central}.
The first step is to write $Z_\eps$ as a sum of a martingale and a term
that vanishes as $\eps \to 0+$.

Define the filtration $\{\mathcal{F}^\eps_t\}_t$ by
\[
\mathcal{F}^\eps_t = \sigma(Z_\eps(s),R_e(S^\eps_e(s)) \, ; \, 0 \leq s \leq t, e \in
\oE).
\]
Since $S^\eps_e(t) \leq \lambda_e t$, the centered counting process
$M^\eps_e(t)=\tilde{R}_e(S^\eps_e(t))$ is a martingale with respect to
$\{\mathcal{F}^\eps_t\}_t$.

It follows from \eqref{eqn:Zeps} that
\begin{equation}
\begin{aligned}
Z_{\eps}(t) &= \eps \sum_{e \in \oE} \nu_e R_e\left( \lambda_e \int_0^{t/\eps^2} 1_{\{\partial_-e\}}(Y(s)) ds \right)
	 + \frac{t \bar{U}}{\eps}\\
	 &= \eps \sum_{e \in \oE} \nu_e M^{\eps}_e(t) + \eps \sum_{e \in
           \oE}\nu_e S_e^{\eps}(t) + \frac{t \bar{U}}{\eps}\\
&= \eps \sum_{e \in \oE} \nu_e M^{\eps}_e(t) + \eps \int_0^{t/\eps^2}
         \rho(Y(s)) ds + \frac{t \bar{U}}{\eps}.
\end{aligned}
\end{equation}

We note that using the definition of $\tilde{\rho}$, we may write
\begin{equation}\label{eqn:Z_eps}
Z_\eps(t) = \eps \sum_{e \in \oE} \nu_e M^\eps_e(t) + \eps \int_0^{t/\eps^2} \tilde{\rho}(Y(s)) ds.
\end{equation}
Since the first sum is a martingale, we need to relate the second term
to a martingale.
To that end, we first define the Dynkin's martingale
\begin{equation}\label{eqn:N}
N(t) = \psi(Y(t)) - \psi(Y(0)) - \int_0^t (L\psi)(Y(s)) ds.
\end{equation}
Using the relationship $L \psi = \tilde{\rho}$ we may write
\begin{equation}\label{eqn:Z_eps2}
Z_{\eps}(t) = \sum_{e \in \oE} \eps \nu_e M^\eps_e(t) - \eps N(t/\eps^2)
 + \eps[\psi(Y(t/\eps^2)) - \psi(Y(0))].
\end{equation}
Since $\psi$ is a bounded function
\begin{equation}
\lim_{\eps \rightarrow 0}	\eps[\psi(Y(t/\eps^2)) - \psi(Y(0))] = 0.
\end{equation}
In order to apply the martingale central limit theorem, we compute the
quadratic variation of the martingale
\[
\sum_{e \in \oE} \eps \nu_e M^\eps_e(t) - \eps N(t/\eps^2),
\]
which can be expanded as follows:
\begin{equation}\label{eqn:Ze_terms}
\begin{aligned}
&\Big[\sum_{e \in \oE} \eps \nu_e M^{\eps}_e(\cdot) - \eps N(\cdot/\eps^2)\Big](t) \\
	=&\; [\sum_{e \in \oE} \eps \nu_e M^{\eps}_e(\cdot),\sum_{e \in \oE} \eps \nu_e M^{\eps}_e(\cdot)](t)
	- [\sum_{e \in \oE} \eps \nu_e M^{\eps}_e(\cdot),\eps N(\cdot/\eps^2)](t) \\
	&- [\eps N(\cdot/\eps^2),\sum_{e \in \oE} \eps \nu_e M^{\eps}_e(\cdot)](t)
	+ [\eps N(\cdot/\eps^2),\eps N(\cdot/\eps^2)](t).
\end{aligned}
\end{equation}
Before demonstrating convergence of (\ref{eqn:Ze_terms}), we note the
following basic result.
\begin{lemma}[See \cite{klebaner2012introduction} for instance.]\label{lem:covariation}
Let $f,g : \real \rightarrow \real$. If $f$ is continuous and $g$ is of finite variation, then their covariation is zero: $[f,g](t) = 0$.
\end{lemma}
The following three lemmas summarize the convergence of the right-hand side of
(\ref{eqn:Ze_terms}). The key points to note are that the processes
$M^\eps_e$ and $N(t/\eps^2)$ are of finite variation, $S^\eps_e(t)$
is absolutely continuous in $t$, and, moreover, $R_a(S^\eps_a(\cdot))$
and $R_b(S^\eps_b(\cdot))$ for $a \neq b$ do not have common jumps (almost
surely). In particular, when computing the quadratic covariations, only the
(common) jumps matter.
\begin{lemma}\label{lem:quadcov_term1}
For every $t \geq 0$, the quadratic variation converges almost surely:
\begin{equation*}
\lim_{\eps \rightarrow 0} [\eps \sum_{e \in \oE} \nu_e M^{\eps}_e(\cdot),\eps \sum_{e \in \oE} \nu_e M_{e}(\cdot)](t)
	= t \sum_{e \in \oE} \lambda_e \nu_e \nu_e^T \pi(\partial_-e).
\end{equation*}
\end{lemma}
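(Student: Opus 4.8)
The plan is to compute the quadratic variation of $\eps \sum_{e \in \oE} \nu_e M^\eps_e$ directly from the jump structure of the underlying Poisson processes, and then pass to the limit using the ergodic averaging already established in Section~\ref{sec-drift-lim}. First I would recall that $M^\eps_e(t) = \tilde R_e(S^\eps_e(t)) = R_e(S^\eps_e(t)) - S^\eps_e(t)$, where $S^\eps_e$ is absolutely continuous; hence the only contributions to any quadratic covariation come from the jumps of the time-changed Poisson processes $R_e(S^\eps_e(\cdot))$. Since the $R_e$ are independent unit-rate Poisson processes, almost surely no two of them jump simultaneously, so $[R_a(S^\eps_a(\cdot)), R_b(S^\eps_b(\cdot))](t) = 0$ for $a \neq b$, and $R_e(S^\eps_e(\cdot))$ has the same jumps (each of size one) as $M^\eps_e$. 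Therefore the cross terms vanish and
\[
\Big[\eps \sum_{e \in \oE} \nu_e M^\eps_e(\cdot)\Big](t) = \eps^2 \sum_{e \in \oE} \nu_e \nu_e^T \,[M^\eps_e, M^\eps_e](t) = \eps^2 \sum_{e \in \oE} \nu_e \nu_e^T \, R_e(S^\eps_e(t)),
\]
because $[M^\eps_e,M^\eps_e](t)$ equals the number of jumps of $R_e(S^\eps_e(\cdot))$ on $[0,t]$, namely $R_e(S^\eps_e(t))$ (recall $\nu_e\nu_e^T$ times the scalar covariation, with the product $\nu_e^a \nu_e^b$ arising entry-wise since both components of a single jump vector $\nu_e$ jump together).

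Next I would invoke Corollary~\ref{cor-ReSe-limit}, which gives $\lim_{\eps \to 0} \eps^2 R_e(S^\eps_e(t)) = t \lambda_e \pi(\partial_- e)$ almost surely for each fixed $e$ and $t$. Since $\oE$ is finite by Assumption~2, the finite sum of these limits gives
\[
\lim_{\eps \to 0} \eps^2 \sum_{e \in \oE} \nu_e \nu_e^T R_e(S^\eps_e(t)) = t \sum_{e \in \oE} \lambda_e \nu_e \nu_e^T \pi(\partial_- e)
\]
almost surely, which is the claimed identity (matching the statement's $\lambda_e \nu_e \nu_e^T \pi(\partial_- e)$ up to the harmless reordering of scalar factors).

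The main obstacle, and the step requiring the most care, is the bookkeeping for the quadratic \emph{co}variation of a vector-valued jump process: one must be precise that $[\nu_a M^\eps_a, \nu_b M^\eps_b]$ is the matrix whose $(i,j)$ entry is $\nu_a^i \nu_b^j [M^\eps_a, M^\eps_b]$, that this is zero for $a \neq b$ by the no-common-jumps property, and that for $a = b$ the scalar $[M^\eps_e, M^\eps_e](t)$ is exactly the jump count $R_e(S^\eps_e(t))$ since each jump of a Poisson process has size one. I would also note explicitly that the absolute continuity of $S^\eps_e$ means the compensator part $S^\eps_e(t)$ contributes nothing to the quadratic variation (it is of finite variation and continuous, so Lemma~\ref{lem:covariation} applies), which is why $[M^\eps_e, M^\eps_e] = [R_e(S^\eps_e(\cdot)), R_e(S^\eps_e(\cdot))]$. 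Everything else is a finite sum and a direct appeal to the already-proven corollary, so no further estimates are needed.
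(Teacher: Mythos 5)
Your proposal is correct and follows essentially the same route as the paper: reduce $[\eps M^\eps_a,\eps M^\eps_b]$ to $\eps^2[R_a(S^\eps_a(\cdot)),R_b(S^\eps_b(\cdot))]$ via the continuity and finite variation of the compensators, kill the cross terms by the absence of common jumps, identify the diagonal terms with $\eps^2 R_e(S^\eps_e(t))$, and conclude by Corollary~\ref{cor-ReSe-limit}. Your write-up simply makes explicit the entrywise bookkeeping for the matrix-valued covariation that the paper leaves implicit.
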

\begin{proof}
For $a,b \in \oE$ we have
\begin{equation*}
[\eps M^{\eps}_a,\eps M^{\eps}_b](t)
	= \eps^2\Big[R_a(S_a^{\eps}(\cdot)), R_b(S_b^{\eps}(\cdot))\Big](t).
\end{equation*}
If $a \neq b$ this term is zero. If $a=b$ we get
\[
[\eps M^{\eps}_a,\eps M^{\eps}_a](t) = \eps^2 R_a(S_a^{\eps}(t)),
\]
which converges to $t \lambda_a \pi(\partial_-a)$ by Lemma \ref{lem-ReSe-limit}.
\end{proof}

\begin{lemma}\label{lem:quadcov_term2_3}
Fix $t \geq 0$. Let $\psi$ satisfy~(\ref{eqn:Lg_fd}). Then the quadratic covariation converges almost surely:
\begin{equation*}
\begin{aligned}
\lim_{\eps \rightarrow 0} [\sum_{e \in \oE} \eps \nu_e M^{\eps}_e(\cdot),\eps N(\cdot/\eps^2)](t)
&= t \sum_{e \in \oE} \nu_e \big(\psi(\partial_+e) - \psi(\partial_-e)\big)^T
\lambda_e \pi(\partial_-e)\\
\lim_{\eps \rightarrow 0} [\eps N(\cdot/\eps^2), \sum_{e \in \oE} \eps \nu_e M^{\eps}_e(\cdot),](t),
&= t \sum_{e \in \oE} \big(\psi(\partial_+e) - \psi(\partial_-e)\big) \nu_e^T
\lambda_e \pi(\partial_-e).
\end{aligned}
\end{equation*}
\end{lemma}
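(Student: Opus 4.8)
The plan is to strip $\eps N(\cdot/\eps^{2})$ down to its jump part, and then evaluate the resulting quadratic covariation of two finite‑variation pure‑jump processes by simply matching their jumps.

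First I would use~\eqref{eqn:N} to write $\eps N(t/\eps^{2}) = \eps\psi(Y(t/\eps^{2})) - \eps\psi(Y(0)) - \eps\int_{0}^{t/\eps^{2}}(L\psi)(Y(s))\,ds$. The term $\eps\psi(Y(0))$ is constant in $t$, and $t\mapsto \eps\int_{0}^{t/\eps^{2}}(L\psi)(Y(s))\,ds$ is continuous and of finite variation since its integrand is bounded ($\oS$ being finite). Because $\sum_{e\in\oE}\eps\nu_{e}M^{\eps}_{e}(\cdot)$ is itself of finite variation, Lemma~\ref{lem:covariation} makes both of these contributions vanish from the covariation. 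Hence it suffices to compute $[\sum_{e\in\oE}\eps\nu_{e}M^{\eps}_{e}(\cdot),\,\eps\psi(Y(\cdot/\eps^{2}))](t)$.

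Both entries are now pure‑jump processes of finite variation, so (with the convention $[A,B]=\sum\Delta A\,\Delta B^{T}$ for $\real^{d}$‑valued $A,B$) their covariation is the sum of outer products of jumps over common jump times:
\[
\Big[\sum_{e\in\oE}\eps\nu_{e}M^{\eps}_{e}(\cdot),\,\eps\psi(Y(\cdot/\eps^{2}))\Big](t)
= \eps^{2}\sum_{0<s\le t}\Big(\sum_{e\in\oE}\nu_{e}\,\Delta M^{\eps}_{e}(s)\Big)\big(\Delta\psi(Y(s/\eps^{2}))\big)^{T}.
\]
By construction of the random time change representation~\eqref{eqn:z}, the process $Y(\cdot/\eps^{2})$ makes a transition along edge $a\in\oE$ precisely at a jump time of $R_{a}(S^{\eps}_{a}(\cdot))$, almost surely no two of the processes $\{R_{a}(S^{\eps}_{a}(\cdot))\}_{a\in\oE}$ jump simultaneously, and each $S^{\eps}_{a}$ is continuous (so $R_{a}(S^{\eps}_{a}(\cdot))$ has unit jumps). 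Thus at such a jump time $s$ one has $\Delta M^{\eps}_{e}(s)=\Delta\tilde R_{e}(S^{\eps}_{e}(s))=\delta_{ea}$ and, extending $\psi$ periodically to $\sS$, $\Delta\psi(Y(s/\eps^{2}))=\psi(\partial_{-}a+\nu_{a})-\psi(\partial_{-}a)=\psi(\partial_{+}a)-\psi(\partial_{-}a)$. Since the number of jumps of $R_{a}(S^{\eps}_{a}(\cdot))$ in $(0,t]$ equals $R_{a}(S^{\eps}_{a}(t))$, the double sum collapses to
\[
\eps^{2}\sum_{a\in\oE}\nu_{a}\,\big(\psi(\partial_{+}a)-\psi(\partial_{-}a)\big)^{T}\,R_{a}(S^{\eps}_{a}(t)).
\]

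Finally I would let $\eps\to0$ and invoke Corollary~\ref{cor-ReSe-limit}, $\eps^{2}R_{a}(S^{\eps}_{a}(t))\to t\,\lambda_{a}\,\pi(\partial_{-}a)$ almost surely, which gives the first identity; the second is its transpose, using $[A,B](t)^{T}=[B,A](t)$. I expect the main obstacle to be the bookkeeping in the third paragraph: carefully justifying that $Y(\cdot/\eps^{2})$ changes state along edge $a$ exactly at the jumps of $R_{a}(S^{\eps}_{a}(\cdot))$, that distinct such processes share no jump almost surely, and that the $\real^{d}$‑valued jump increments of $\psi(Y(\cdot/\eps^{2}))$ and the unit jumps of $M^{\eps}_{a}$ are matched up correctly. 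Once this is pinned down, the passage to the limit is immediate.
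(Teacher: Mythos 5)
Your proposal is correct and follows essentially the same route as the paper: kill the continuous finite-variation parts of $N$ (and the compensator of $\tilde R_e$) via Lemma~\ref{lem:covariation}, reduce to the covariation of the pure-jump processes $R_e(S^\eps_e(\cdot))$ and $\psi(Y(\cdot/\eps^2))$, match their common jumps to get $\eps^2\sum_a \nu_a(\psi(\partial_+a)-\psi(\partial_-a))^T R_a(S^\eps_a(t))$, and conclude with Corollary~\ref{cor-ReSe-limit}. The only difference is that you spell out the jump-matching bookkeeping (no simultaneous jumps of distinct $R_a(S^\eps_a(\cdot))$, unit jumps, periodic extension of $\psi$) that the paper leaves implicit.
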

\begin{proof}
We show the first limit. Substituting the definition of $N$ we obtain
\begin{equation*}
\Big[\sum_{e \in \oE} \eps \nu_e M^{\eps}_e(\cdot),\eps
  N(\cdot/\eps^2)\Big](t) = \sum_{e \in \oE} \eps^2 \Big[\nu_e
  R_e(S_e^{\eps}(\cdot)),\psi(Y(\cdot/\eps^2))\Big](t).
\end{equation*}
Now
\begin{equation*}
\Big[\nu_e R_e(S_e^{\eps}(\cdot)),\psi(Y(\cdot/\eps^2))\Big](t) = \nu_e
(\psi(\partial_+ e)-\psi(\partial_-e))^T R_e(S_e^{\eps}(t)).
\end{equation*}
As before, Lemma \ref{lem-ReSe-limit} yields the result.
\end{proof}
\begin{lemma}\label{lem:quadcov_term4}
Define $N$ as in~(\ref{eqn:N}).  Let $\psi$ satisfy~(\ref{eqn:Lg_fd}). Then the quadratic variation converges almost surely:
\begin{equation*}
\lim_{\eps \rightarrow 0} [\eps N(\cdot/\eps^2),\eps N(\cdot/\eps^2)](t)
= t\sum_{e \in \oE} (\psi(\partial_+e)-\psi(\partial_-e))(\psi(\partial_+e)-\psi(\partial_-e))^T \lambda_e\pi(\partial_-e).
\end{equation*}
\end{lemma}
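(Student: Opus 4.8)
The plan is to mirror the computations in the proofs of Lemmas~\ref{lem:quadcov_term1} and~\ref{lem:quadcov_term2_3}, exploiting the fact that the quadratic variation of $N(\cdot/\eps^2)$ picks up only its jumps, and that the jumps of $\psi(Y(\cdot/\eps^2))$ are organized by the independent Poisson clocks $\{R_e\}_{e\in\oE}$. First I would decompose $N$ from~(\ref{eqn:N}) as the sum of the pure-jump process $t \mapsto \psi(Y(t))-\psi(Y(0))$ and the absolutely continuous (hence finite variation) process $t \mapsto -\int_0^t (L\psi)(Y(s))\,ds$. Applying Lemma~\ref{lem:covariation} entrywise to the $d\times d$ matrix-valued covariation, and using bilinearity, the constant $\psi(Y(0))$ and the finite-variation part contribute nothing, so
\[
[\eps N(\cdot/\eps^2),\eps N(\cdot/\eps^2)](t) = \eps^2\,\big[\psi(Y(\cdot/\eps^2)),\psi(Y(\cdot/\eps^2))\big](t).
\]

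Next I would identify the jumps of $\psi(Y(\cdot/\eps^2))$. As in the time-change representation~(\ref{eqn:z}), the jumps of $Y$ are driven by the independent unit-rate Poisson processes $\{R_e\}_{e\in\oE}$: by the indicator in~(\ref{eqn:S_e}), the clock $S^\eps_e$ advances only while $Y=\partial_-e$, so each firing of $R_e$ within the range of $S^\eps_e$ corresponds to a jump of $Y$ from $\partial_-e$ to $\partial_+e$ (computed modulo $1$), at which $\psi(Y)$ jumps by $\psi(\partial_+e)-\psi(\partial_-e)$. Since distinct $R_e$ almost surely have no common jumps, the matrix-valued quadratic variation collects only the ``diagonal'' outer products, and, exactly as in the computation preceding Lemma~\ref{lem:quadcov_term2_3},
\[
\big[\psi(Y(\cdot/\eps^2)),\psi(Y(\cdot/\eps^2))\big](t) = \sum_{e\in\oE}\big(\psi(\partial_+e)-\psi(\partial_-e)\big)\big(\psi(\partial_+e)-\psi(\partial_-e)\big)^T R_e(S^\eps_e(t)),
\]
since $R_e(S^\eps_e(t))$ counts the number of jumps of $Y$ along edge $e$ up to time $t/\eps^2$.

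Finally, I would multiply by $\eps^2$ and let $\eps\to 0$: Corollary~\ref{cor-ReSe-limit} gives $\eps^2 R_e(S^\eps_e(t)) \to t\lambda_e\pi(\partial_-e)$ almost surely for each $e\in\oE$, and since $\oE$ is finite by Assumption~2 the sum converges termwise, yielding
\[
\lim_{\eps\to 0}[\eps N(\cdot/\eps^2),\eps N(\cdot/\eps^2)](t) = t\sum_{e\in\oE}\big(\psi(\partial_+e)-\psi(\partial_-e)\big)\big(\psi(\partial_+e)-\psi(\partial_-e)\big)^T \lambda_e\pi(\partial_-e).
\]

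I expect the only mildly delicate point to be the bookkeeping in the middle step: justifying that the entrywise covariation of the matrix-valued jump process reduces to the stated sum of outer products weighted by the counts $R_e(S^\eps_e(t))$ --- i.e.\ that the finite-variation part drops out and that distinct Poisson clocks produce no cross terms. This is, however, entirely parallel to the arguments already given for Lemmas~\ref{lem:quadcov_term1} and~\ref{lem:quadcov_term2_3}, so no genuinely new difficulty arises; the extension to the vector-valued $\psi$ is handled simply by reading covariations componentwise.
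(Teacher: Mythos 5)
Your proposal is correct and follows the paper's own argument exactly: drop the finite-variation part of $N$ via Lemma~\ref{lem:covariation}, identify $[\psi(Y(\cdot/\eps^2)),\psi(Y(\cdot/\eps^2))](t)$ with the sum of outer products of jumps counted by $R_e(S^\eps_e(t))$, and conclude with Corollary~\ref{cor-ReSe-limit}. The extra bookkeeping you supply (attributing each jump of $Y$ to the firing of a single clock $R_e$ and noting the absence of common jumps across distinct clocks) is precisely what the paper leaves implicit.
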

\begin{proof}
We have
\begin{equation*}
\begin{aligned}
[\eps N(\cdot/\eps^2), \eps N(\cdot/\eps^2)](t) &= \eps^2
[\psi(Y(\cdot/\eps^2)), \psi(Y(\cdot/\eps^2))](t)\\
&= \eps^2 \sum_{e \in \oE} (\psi(\partial_+e)-\psi(\partial_-e))(\psi(\partial_+e)-\psi(\partial_-e))^T  R_e(S^{\eps}_e(t)).
\end{aligned}
\end{equation*}
As before, the result follows from Lemma \ref{lem-ReSe-limit}.
\end{proof}

\begin{proof}(of Theorem \ref{thm:deff_rigorous})
We first show that the maximum jump of 
$$\sum_{e \in \oE} \eps \nu_e M^{\eps}_e(t) - \eps N(t/\eps^2)$$ is asymptotically negligible. Note that $M^{\eps}_e$ has a maximum jump size of 1 because $R_e$ is a Poisson process and $S_e^{\eps}$ is continuous. Also, the jumps of $N(t/\eps^2)$ are bounded above by some constant $k$ because $\psi$ is bounded. Thus,
\begin{equation*}
\begin{aligned}
&\lim_{\eps \rightarrow 0} \Exp\bigg[\sup_{s \leq t}\Big|\sum_{e \in \oE} \eps \nu_e M^{\eps}_e(s) - \eps N(s/\eps^2)
		-\Big(\sum_{e \in \oE} \eps \nu_e M^{\eps}_e({s-}) - \eps N({s-}/\eps^2)\Big)\Big|\bigg] \\
&= \lim_{\eps \rightarrow 0} \eps\Exp\bigg[\sup_{s \leq t}\Big|\sum_{e \in \oE} \big(\nu_e M^{\eps}_e(s) - \nu_e M^{\eps}_e({s-})\big)
		- \big(N(s/\eps^2)- N({s-}/\eps^2\big)\Big|\bigg] \\
&\leq \lim_{\eps \rightarrow 0} \eps\Exp\bigg[\sum_{e \in \oE} |\nu_e| + k\bigg]
 = 0.
\end{aligned}
\end{equation*}
Next, applying Lemmas~\ref{lem:quadcov_term1},~\ref{lem:quadcov_term2_3}, and~\ref{lem:quadcov_term4}, the limit of the quadratic covariation is
\begin{equation*}
\lim_{\eps \rightarrow 0}\Big[\sum_{e \in \oE} \eps \nu_e M^{\eps}_e(\cdot) -
  \eps N(\cdot/\eps^2)\Big](t) =
 t\sum_{e \in \oE} \alpha_e \alpha_e^T \lambda_e \pi(\partial_-e).
\end{equation*}
Hence we can apply the martingale functional central limit theorem from~\cite{ethier2005markov} to the martingale $\sum_{e \in \oE} \eps \nu_e M^{\eps}_e(t) - \eps N(t/\eps^2)$ to get the following weak convergence:
\begin{equation*}
\sum_{e \in \oE} \eps \nu_e M^{\eps}_e(\cdot) - \eps N(\cdot/\eps^2) \Rightarrow {\bf Z} \text{ in } D^{d}[0,\infty) \text{ as } \eps \rightarrow 0.
\end{equation*}
Recall that
\begin{equation*}
Z_{\eps}(t)
= \sum_{e \in \oE} \eps \nu_e M^{\eps}_e(t) - \eps N(t/\eps^2) + \eps[\psi(Y(t/\eps^2)) - \psi(Y(0))].
\end{equation*}
Because the first two terms converge weakly to a Brownian motion ${\bf Z}$,
and the terms involving $\psi$ converge a.s.\ to 0, the entire process
$Z_{\eps}$ converges weakly to ${\bf Z}$. See Theorem 3.1 of \cite{billingsley2011convergence} for a proof of this result.
\end{proof}

\subsection{Modified process $I$}\label{sec:processI}
In this subsection we consider modifications to the random walk $Z$ 
for obtaining better models of the finescale. 
We note that the process $Z$ is confined to the state space $\sS$ and 
is a jump process. A natural modeling question is what happens if a more realistic 
model, such as a path continuous process that interpolates $Z$ between jump
times is used. This interpolation may be a straight line joining the 
nodes involved in the jump or a piecewise straight line that
bounces off obstructions. We state a very general result that is useful in
this context. 
In particular, we note that an interpolated process arising from above
modeling situation will satisfy the 
condition \eqref{eq:IandZ} required by Theorem \ref{thm:interpI} because the
maximum deviation of $I$ from $Z$ will be bounded in terms of the size of the periodic
cell. 

\begin{theorem}\label{thm:interpI}
Suppose $I$ is a {\em cadlag} process defined on $(\Omega,\mathcal{F},\mathbb{P})$ 
and that there exists $k >0$ such that 
\begin{equation}\label{eq:IandZ}
|I(t)-Z(t)| \leq k \quad \forall t \geq 0,
\end{equation}     
almost surely. 
Then for fixed $0<t_0<T<\infty$ 
\[
\lim_{\eps \to 0} \sup_{t \in [t_0,T]} |\eps^2 I(t/\eps^2) - \bar{U}t| = 0,
\]
almost surely. Moreover if we define $I_\eps$ by 
$I_\eps(t) = \eps (I(t/\eps^2) - \bar{U}t/\eps^2)$ then
\[
I_\eps \Rightarrow {\bf Z} \text{ in } D^{d}[0,\infty) \text{ as } \eps \rightarrow 0,
\]
where ${\bf Z}$ is as defined in Theorem \ref{thm:deff_rigorous}. 
\end{theorem}
\begin{proof}
The almost sure limit follows readily since for all $t \geq 0$
\[
|\eps^2 I(t/\eps^2) - \bar{U}t| \leq |\eps^2 Z(t/\eps^2) - \bar{U}t| + \eps^2
k.
\]
To see the second limit, we note that for all $t \geq 0$
\[
|I_\eps(t) - Z_\eps(t)| \leq \eps k,
\]
and hence Theorem 3.1 of \cite{billingsley2011convergence} in conjunction with
Theorem \ref{thm:deff_rigorous} delivers the result.
\end{proof}

\section{Null drift conditions}\label{sect:solvability}

We take a closer look at the null drift condition \eqref{eq:null-drift} in this section.

\subsection{Detailed balance and null drift}

If $e'=(y,x) \in \sE$ and $e=(x,y) \in \sE$, we shall say
$e'$ is the {\em reversal} of $e$. In general, an edge $e \in \sE$ may not
have a reversal (in $\sE$).  We shall say that the process $Z$
(or the triple $(\sS,\sE,\lambda)$)
satisfies {\em detailed balance} provided
\begin{equation}\label{eqn:detailed-balance}
	\lambda_e \pi(\partial_- e) = \lambda_{e'} \pi(\partial_- e') \quad
        \forall e \in \sE,
\end{equation}
where $e'$ denotes the reversal of $e$ and  $\pi$ is the (pull back of the) stationary distribution
of the projected process $Y$. We note that the
requirement that the reversal of each edge (in $\sE$) is also present (in $\sE$)
is implicit in our definition of
detailed balance.
We also note that our notion of detailed balance is stricter than the
conventional notion of detailed balance applied to $Y$, which merely requires
that $L(x,y) \pi(x) = L(y,x) \pi(y)$ for all $x,y \in \oS$.

Recall the map $\kappa:\oE \to \sE_\Pi$ defined in
Section~\ref{sect:preliminaries} which is a surjection.
We shall say that $\kappa$ {\em commutes with edge reversals} provided
whenever $e = (y_1,y_2) \in \sE_\Pi$ and its reversal $e'=(y_2,y_1) \in \sE_\Pi$
then for each $\tilde{e} \in \kappa^{-1}(\{e\}) \subset \oE$
we have $\tilde{e}' \in \kappa^{-1}(\{e'\}) \subset \oE$, where
$\tilde{e}'$ is the reversal of $\tilde{e}$.

We state a useful lemma.
\begin{lemma}\label{lem-detailed-equivalence}
Suppose $\kappa$ is a bijection and it commutes with edge reversals.
Then detailed balance of $Z$, defined by \eqref{eqn:detailed-balance}, is equivalent to the
standard notion of detailed balance of $Y$.
\end{lemma}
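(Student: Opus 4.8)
The plan is to unwind both notions of detailed balance down to the level of individual edges in $\oE$ and compare them term by term, using the bijectivity of $\kappa$ and the reversal-commuting property to match edges on the two sides. First I would recall that, since $\kappa$ is a bijection, there is a one-to-one correspondence between the edge classes $\oE$ and the ``projected edges'' $\sE_\Pi \subset \oS \times \oS$; moreover the jump rate $\bar\lambda(x,y)$ defining $L(x,y)$ for $x \neq y$ collapses to a single term $\lambda_{\tilde e}$ where $\tilde e = \kappa^{-1}(x,y)$, because the sum defining $\bar\lambda$ has exactly one summand in the bijective case. Thus for distinct $x,y \in \oS$ the standard detailed balance relation $L(x,y)\pi(x) = L(y,x)\pi(y)$ for $Y$ becomes, whenever both $(x,y)$ and $(y,x)$ lie in $\sE_\Pi$,
\[
\lambda_{\tilde e}\,\pi(x) = \lambda_{\tilde e'}\,\pi(y),
\]
where $\tilde e = \kappa^{-1}(x,y)$ and $\tilde e' = \kappa^{-1}(y,x)$; if only one of $(x,y),(y,x)$ is in $\sE_\Pi$, then $L(x,y)\pi(x) = L(y,x)\pi(y)$ forces the nonzero side to vanish, which is impossible since $\lambda > 0$ and $\pi > 0$, so in fact standard detailed balance of $Y$ already forces $\sE_\Pi$ (hence $\oE$, via $\kappa$) to be symmetric under reversal.

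Next I would show the two conditions are literally the same relation. For the direction ``$Z$ detailed balanced $\Rightarrow$ $Y$ detailed balanced'': given $x \neq y$ in $\oS$ with $(x,y) \in \sE_\Pi$, let $\tilde e \in \oE$ be its preimage and let $e \in \sE$ be a representative; its reversal $e'$ lies in $\sE$ by the implicit requirement in \eqref{eqn:detailed-balance}, and $\partial_- e = x$-lift, $\partial_- e' = y$-lift. Detailed balance of $Z$ gives $\lambda_e \pi(\partial_- e) = \lambda_{e'}\pi(\partial_- e')$, and projecting (using periodicity of $\lambda$ and the identification $\pi$ on $\oS$) yields $\lambda_{\tilde e}\pi(x) = \lambda_{\tilde e'}\pi(y)$ where $\tilde e' = \kappa^{-1}(y,x)$ — here I use that $\kappa$ commutes with reversals to know $\tilde e' \in \kappa^{-1}(\{(y,x)\})$, so $(y,x) \in \sE_\Pi$ and the reversed edge genuinely contributes. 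Since $L(x,y) = \lambda_{\tilde e}$ and $L(y,x) = \lambda_{\tilde e'}$ in the bijective case, this is exactly $L(x,y)\pi(x) = L(y,x)\pi(y)$. For the reverse direction: assume $Y$ is detailed balanced. By the remark above, $\sE_\Pi$ is reversal-symmetric, hence (via $\kappa^{-1}$ and the commuting property) so is $\oE$, and hence — using invariance under integer translations and that $\kappa$ identifies $\oE$ with a reversal-symmetric subset — every $e \in \sE$ has its reversal $e' \in \sE$. Then for each $e \in \sE$, projecting to $\tilde e = [\text{class of } e] \in \oE$ with $\kappa(\tilde e) = (x,y)$, the relation $L(x,y)\pi(x) = L(y,x)\pi(y)$ reads $\lambda_{\tilde e}\pi(x) = \lambda_{\tilde e'}\pi(y)$, which lifts back (by periodicity of $\lambda$ and $\pi\circ\Pi$) to $\lambda_e \pi(\partial_- e) = \lambda_{e'}\pi(\partial_- e')$, i.e.\ \eqref{eqn:detailed-balance}.

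The main obstacle, and the step I would be most careful about, is the bookkeeping that reconciles the three different edge objects — $\sE$, $\oE$, and $\sE_\Pi$ — and in particular verifying that the reversal operation is compatible across all three under the stated hypotheses: that $\tilde e \mapsto \tilde e'$ on $\oE$ is well-defined (independent of the chosen representative in $\sE$, which follows from translation invariance), that $\kappa$ sends $\tilde e'$ to the reversal of $\kappa(\tilde e)$ (this is precisely where ``commutes with edge reversals'' is invoked), and that the ``implicit'' symmetry assumption built into \eqref{eqn:detailed-balance} for $Z$ matches the automatically-forced symmetry of $\sE_\Pi$ coming from detailed balance of $Y$. Once these identifications are pinned down, the equivalence of the two balance equations is a one-line substitution using $L(x,y) = \lambda_{\kappa^{-1}(x,y)}$, which holds exactly because $\kappa$ is a bijection so that the defining sum for $\bar\lambda$ degenerates to a single term. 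I would also flag that positivity of $\pi$ and of $\lambda$ is used twice — once to rule out ``one-sided'' edges in $\sE_\Pi$, and implicitly to ensure the relations are genuine constraints rather than vacuous.
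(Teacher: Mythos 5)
Your proposal is correct and follows essentially the same route as the paper's (much terser) proof: use bijectivity of $\kappa$ to collapse the sum defining $\bar\lambda$ to the single term $L(y,y')=\lambda_e$, use the reversal-commuting hypothesis to identify $L(y',y)=\lambda_{e'}$, and then observe that the two balance relations are the same equation after projecting by $\Pi$. Your extra bookkeeping (well-definedness of reversal on $\oE$, and positivity of $\lambda,\pi$ ruling out one-sided edges) is a sound elaboration of steps the paper leaves implicit.
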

\begin{proof}
Given $e=(z,z') \in \sE$, set $y=\Pi(z)$ and $y'=\Pi(z')$. Then
$\kappa([e])=(y,y')$ where $[e] \in \oE$ is the equivalence class of $e$.
Our assumptions on $\kappa$ imply that $L(y,y')=\lambda_e$ and
$L(y',y)=\lambda_{e'}$, where $e'=(z',z) \in \sE$ is the reversal of $e$.
\end{proof}
We note that, for graphs that are
of interest to us, $\kappa$ will satisfy the requirements of this lemma.

\begin{theorem}\label{thm:detailed-balance}
Suppose that $Z$ satisfies the detailed balance condition~(\ref{eqn:detailed-balance}).
Then the null drift condition holds. As a special case, if the rates are
reversible, then the null drift condition holds. 
\end{theorem}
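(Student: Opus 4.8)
The plan is to show that detailed balance forces $\bar U = \sum_{e \in \oE} \nu_e \lambda_e \pi(\partial_- e) = 0$ by pairing each edge with its reversal and observing that the contributions cancel. First I would note that, by the definition of detailed balance, every edge $e \in \sE$ has a reversal $e' \in \sE$, so the map $e \mapsto e'$ is an involution on $\sE$ (without fixed points, since Assumption 1 excludes self-edges). This involution descends to an involution on the equivalence classes $\oE$: if $e = (x,y)$ then $e' = (y,x)$, and translating $e$ by $n \in \integ^d$ translates $e'$ by the same $n$, so $[e] \mapsto [e']$ is well defined on $\oE$. Moreover $\nu_{e'} = -\nu_e$ and $\partial_- e' = \partial_+ e$.

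Next I would rewrite the sum defining $\bar U$ by grouping the terms $e$ and $e'$. For a reversal pair,
\begin{equation*}
\nu_e \lambda_e \pi(\partial_- e) + \nu_{e'} \lambda_{e'} \pi(\partial_- e')
= \nu_e \lambda_e \pi(\partial_- e) - \nu_e \lambda_{e'} \pi(\partial_- e').
\end{equation*}
By the detailed balance identity \eqref{eqn:detailed-balance}, $\lambda_e \pi(\partial_- e) = \lambda_{e'} \pi(\partial_- e')$, so this pair contributes zero. Since the involution $e \mapsto e'$ partitions $\oE$ into reversal pairs, summing over all of $\oE$ gives $\bar U = 0$, which is exactly the null drift condition \eqref{eq:null-drift}.

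The one point requiring care — the ``main obstacle,'' though it is a mild one — is checking that the involution is genuinely well defined on $\oE$ and that the weights $\lambda$ and the stationary vector $\pi$ interact correctly with it: one must confirm that $\lambda_e \pi(\partial_- e)$ is the same for all representatives of the class $[e]$ (which follows from periodicity of $\lambda$ in Assumption 1 together with $\pi$ being a function on $\oS$), so that the pairwise cancellation in the $\oE$-sum is legitimate rather than merely formal in the $\sE$-sum. A clean alternative, if one prefers to avoid fussing over representatives, is to first establish the identity at the level of the projected process: detailed balance of $Z$ implies $L(x,y)\pi(x) = L(y,x)\pi(y)$ for all $x,y \in \oS$ (this is the weaker notion discussed before Lemma~\ref{lem-detailed-equivalence}), and then compute $\bar U = \sum_{y \in \oS} \rho(y)\pi(y)$ directly: writing $\rho(y) = \sum_{z} (z - y) L(y,z)$ and using the symmetry $L(y,z)\pi(y) = L(z,y)\pi(z)$, the double sum over ordered pairs $(y,z)$ is antisymmetric under swapping $y \leftrightarrow z$, hence vanishes. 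Either route is short; I would present the edge-pairing version since it is the most transparent and mirrors the structure of the later sections on reversibility.
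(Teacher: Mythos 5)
Your main argument is correct and is essentially the paper's proof in disguise: the paper substitutes the detailed balance identity into $\bar U$, uses $\nu_{e'}=-\nu_e$, and re-indexes via \eqref{eqn:sum-of-sums} to get $\bar U=-\bar U$, which is exactly your pairwise cancellation over reversal pairs written globally. One warning, though: the ``clean alternative'' you sketch at the end does not actually work, because $\rho(y)=\sum_{e\in\sE_y}\nu_e\lambda_e$ cannot be rewritten as $\sum_z (z-y)L(y,z)$ --- the jump vectors $\nu_e$ live in $\real^d$ and differ from differences of representatives in $\oS$ for edges that wrap around the unit cell, and several classes in $\oE$ with distinct $\nu_e$ may project to the same pair in $\sE_\Pi$; indeed the paper explicitly points out that its edge-level detailed balance is strictly stronger than the conventional detailed balance of $Y$, so an argument using only the latter cannot suffice.
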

\begin{proof}
From the detailed balance we obtain that
\[
\sum_{e \sE_y} \nu_e \lambda_e \pi(\partial_- e)= \sum_{e \in \sE_y} \nu_e \lambda_{e'}
\pi(\partial_- e'),
\]
where $e'$ stands for the reversal of an edge $e$.  Thus
\[
\begin{aligned}
\bar{U} &= \sum_{y \in \oS} \sum_{e \in \sE_y} \nu_e \lambda_e \pi(\partial_-
e) = \sum_{y \in \oS} \sum_{e \in \sE_y} \nu_e \lambda_{e'} \pi(\partial_- e')
= - \sum_{y \in \oS} \sum_{e \in \sE_y} \nu_{e'} \lambda_{e'} \pi(\partial_-
e')\\
&= -\sum_{y \in \oS} \sum_{e \in \sE'_y} \nu_e \lambda_e \pi(\partial_- e)
= -\sum_{y \in \oS} \sum_{e \in \sE_y} \nu_e \lambda_e \pi(\partial_- e) = -\bar{U},
\end{aligned}
\]
where we have used the fact that $\nu_{e'}=-\nu_e$ and \eqref{eqn:sum-of-sums}.

In the special case of reversible rates, the detailed balance holds since the
stationary distribution is constant. Thus the null drift condition follows. 
\end{proof}

Note that the reverse implication of Theorem~\ref{thm:detailed-balance} does not hold. As a counter example, consider the ``whirlpool'' graph depicted in Figure~\ref{fig:detailed-balance}. Here,
\begin{equation*}
\begin{aligned}
	\bar{S} &= \{(0,0),(1/3,0),(2/3,0),(0,1/3),(2/3,1/3),(0,2/3),(1/3,2/3),(2/3,2/3)\} \\
	&\eqqcolon \{y_1,y_2,y_3,y_4,y_5,y_6,y_7,y_8\}.
\end{aligned}
\end{equation*}
The edge set can easily be deduced from Figure~\ref{fig:detailed-balance}. The
edge set $\oE$ is depicted. Every edge has the same jump rate
$\bar{\lambda} >0$. Note that the stationary distribution
is given by $\pi(y) = 1/8$ for all $y \in \oS$. It is also clear that the detailed balance
condition does not hold.

Simple algebra shows that the drift field $\rho=(\rho_1,\rho_2)^T$ is given by
\begin{equation*}
\begin{aligned}
\rho_1(y_i) \coloneqq
\begin{cases}
	 \bar{\lambda} & i = 1,2,3\\
	   0 & i = 4,5\\
	-\bar{\lambda} & i = 6,7,8
\end{cases}
\end{aligned}
\end{equation*}
and
\begin{equation*}
\begin{aligned}
\rho_2(y_i) \coloneqq
\begin{cases}
	 \bar{\lambda} & i = 3,5,8\\
	   0 & i = 2,7\\
	-\bar{\lambda} & i = 1,4,6.
\end{cases}
\end{aligned}
\end{equation*}
Clearly, $\sum_{y \in \oS} \rho(y) \pi(y)= 0$ and thus the null drift
condition holds.
\begin{figure}
   \centering
   \includegraphics[width=60mm]{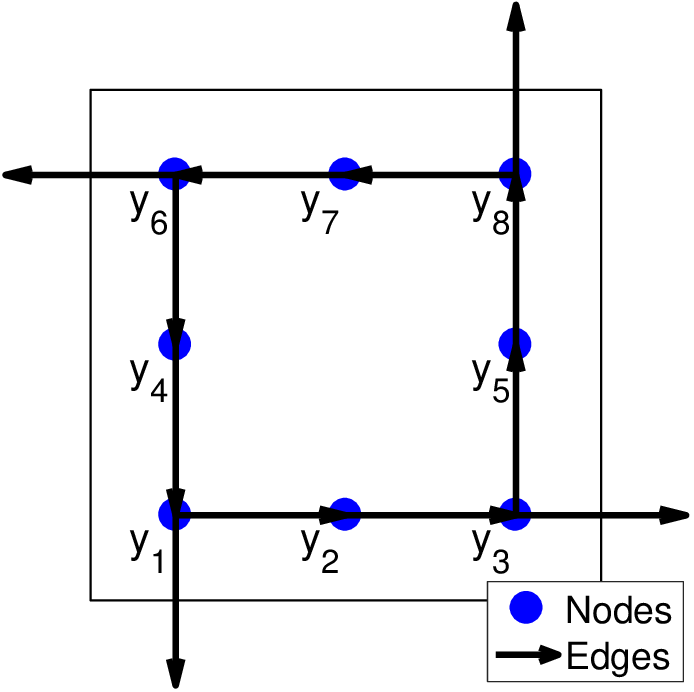}
   \caption{The ``whirlpool'' example has null drift and a rate matrix $L$ that is not symmetric with respect to $\pi$. All edges have the same jump rate $\bar{\lambda}$.}
   \label{fig:detailed-balance}
\end{figure}

Here we mention a connection between detailed balance and self-adjointness
of $L$.
Recall that $L$ is
self-adjoint with respect to a probability measure $p$ on $\oS$ provided
\[
\sum_{y\in\oS} (Lf)(y) g(y) p(y) = \sum_{y\in\oS} f(y) (Lg)(y) p(y)
\]
for every pair $f,g:\oS \to \real$.
The following lemma is well known.
\begin{lemma}\label{lem:detailed-balance-symmetry}
The process $Y$ satisfies the detailed balance condition if and only if $L$ is
symmetric with respect to the stationary distribution $\pi$.
\end{lemma}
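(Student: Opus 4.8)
The plan is to prove both implications by directly unwinding the two definitions, treating $L$ as the matrix $(L(x,y))_{x,y\in\oS}$ so that $(Lf)(y)=\sum_{z\in\oS}L(y,z)f(z)$. Since $\oS$ is finite by Assumption 2, every sum in sight is finite and may be freely reordered, so no convergence or interchange-of-summation issues arise.

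For the forward direction, I would assume the detailed balance condition $L(x,y)\pi(x)=L(y,x)\pi(y)$ for all $x,y\in\oS$; note this is automatic on the diagonal $x=y$, so it holds for every pair. Then for arbitrary $f,g\in\real^{\oS}$ I expand
\[
\sum_{y\in\oS}(Lf)(y)\,g(y)\,\pi(y)=\sum_{y\in\oS}\sum_{z\in\oS}L(y,z)\,\pi(y)\,f(z)\,g(y),
\]
substitute $L(y,z)\pi(y)=L(z,y)\pi(z)$, and reindex the double sum to recognize the right-hand side as $\sum_{z\in\oS}f(z)\,(Lg)(z)\,\pi(z)$. This is precisely self-adjointness of $L$ with respect to $\pi$.

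For the converse, I would assume $L$ is symmetric with respect to $\pi$ and test the identity against indicator functions: taking $f=1_{\{x\}}$ and $g=1_{\{y\}}$ collapses $\sum_{w\in\oS}(Lf)(w)\,g(w)\,\pi(w)$ to $L(y,x)\pi(y)$ and $\sum_{w\in\oS}f(w)\,(Lg)(w)\,\pi(w)$ to $L(x,y)\pi(x)$; equating these for all choices of $x,y\in\oS$ yields exactly the detailed balance condition for $Y$.

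There is essentially no serious obstacle here — the lemma is a finite-dimensional linear-algebra identity. The only points requiring minor care are (i) confirming that the relation $L(x,y)\pi(x)=L(y,x)\pi(y)$, once restricted to off-diagonal pairs, is equivalent to the stated detailed balance condition, since the diagonal entries of $L$ are fixed by the off-diagonal ones via \eqref{eqn:rate-matrix}; and (ii) keeping the bookkeeping straight between the ``operator'' and ``matrix'' interpretations of $L$ discussed after \eqref{eqn:Lt-op}. Since $\pi(y)>0$ for all $y\in\oS$, no degeneracy arises and the argument is symmetric in the two directions.
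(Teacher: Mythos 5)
Your proof is correct: both directions are the standard finite-dimensional computation, and your attention to the diagonal entries and to the operator-versus-matrix reading of $L$ covers the only points of care. The paper itself offers no proof of this lemma (it is simply declared ``well known''), so there is nothing to compare against; your argument is exactly the expected one and would serve as the omitted proof.
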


\subsection{Sufficient symmetries of the graph imply null drift}
Since the verification of the null drift condition involves the knowledge of
$\pi$, it may be useful to find conditions that guarantee null drift without
having to compute $\pi$. We shall provide some results in this subsection that
show that if the graph has sufficient symmetries then the null drift condition
holds.

Given any vector space $V$ over $\real$, let $\sL(V)$ denote the vector space (over $\real$) of all continuous linear maps from $V$ into $V$. Let $\phi:\oS \to \oS$ be a bijection. We associate with it the {\em pull-back} $\phi^* \in \sL(\real^{\oS})$ defined by
\begin{equation*}
\phi^* f = f \circ \phi \quad \forall f \in \real^{\oS}.
\end{equation*}
We state some basic lemmas about $\phi^*$ and ultimately arrive at a
sufficient condition for null drift. Given $f,g \in \real^{\oS}$ we
define their inner product in the obvious way:
\[
(f,g) = \sum_{y \in \oS} f(y) g(y).
\]
\begin{lemma}\label{lem:phistar-transpose}
Let $\phi:\oS \to \oS$ be a bijection. Then $(\phi^*)^T = (\phi^{-1})^*$ and $\phi^*$ preserves the inner product: $(\phi^* f,\phi^* g) = (f,g)$ for all $f,g \in \real^{\oS}$.
\end{lemma}

\begin{lemma}
Let $\phi:\oS \to \oS$ be a bijection and $A \in
\sL(\real^{\oS})$.
The following are equivalent:
\begin{enumerate}
\item $A(\phi(x),\phi(y)) = A(x,y) \quad \forall x,y \in \oS$.
\item $A \circ \phi^* = \phi^* \circ A$.
\item $A^T \circ (\phi^{-1})^* = (\phi^{-1})^* \circ A^T$.
\end{enumerate}
\end{lemma}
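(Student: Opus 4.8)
The plan is to prove the three equivalences by treating $A$ both as a function on $\oS \times \oS$ and as a linear operator on $\real^{\oS}$, and to exploit the previous lemma (Lemma~\ref{lem:phistar-transpose}) which identifies $(\phi^*)^T = (\phi^{-1})^*$. I would first fix notation: for $y \in \oS$ let $\delta_y \in \real^{\oS}$ be the indicator of $\{y\}$, so that $A(x,y) = (A\delta_y)(x) = (\delta_x, A\delta_y)$, and note that $\phi^* \delta_y = \delta_{\phi^{-1}(y)}$ since $(\phi^*\delta_y)(x) = \delta_y(\phi(x)) = 1$ iff $\phi(x)=y$ iff $x = \phi^{-1}(y)$.

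For (1) $\Leftrightarrow$ (2), I would compute $(\phi^* \circ A)\delta_y$ and $(A \circ \phi^*)\delta_y$ applied to an arbitrary point, or more cleanly pair against $\delta_x$. We have $(\delta_x, A\phi^* \delta_y) = (\delta_x, A \delta_{\phi^{-1}(y)}) = A(x, \phi^{-1}(y))$, while $(\delta_x, \phi^* A \delta_y) = (\phi^* \delta_x$ — no: better to use $(\phi^*)^T = (\phi^{-1})^*$, so $(\delta_x, \phi^* A\delta_y) = ((\phi^{-1})^* \delta_x, A\delta_y) = (\delta_{\phi(x)}, A\delta_y) = A(\phi(x), y)$. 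Hence $A \circ \phi^* = \phi^* \circ A$ holds iff $A(x,\phi^{-1}(y)) = A(\phi(x),y)$ for all $x,y$, and replacing $y$ by $\phi(y)$ this is exactly $A(x,y) = A(\phi(x),\phi(y))$, which is (1). Since the $\delta_y$ span $\real^{\oS}$ (finiteness of $\oS$ by Assumption 2) and both sides are linear, equality on this basis gives equality as operators.

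For (2) $\Leftrightarrow$ (3), I would transpose the operator identity in (2): $A\circ\phi^* = \phi^*\circ A$ is equivalent to $(\phi^*)^T \circ A^T = A^T \circ (\phi^*)^T$, using $(BC)^T = C^T B^T$ and that transposition is an involution. By Lemma~\ref{lem:phistar-transpose}, $(\phi^*)^T = (\phi^{-1})^*$, so this reads $(\phi^{-1})^* \circ A^T = A^T \circ (\phi^{-1})^*$, which is (3). This direction is essentially formal once the transpose identity from the preceding lemma is invoked.

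I do not expect a serious obstacle here; the main care needed is bookkeeping about which way $\phi$ versus $\phi^{-1}$ enters when passing between the pointwise statement and the operator statement, since $\phi^*$ is composition with $\phi$ but sends $\delta_y$ to $\delta_{\phi^{-1}(y)}$. To keep the argument clean I would do the pairing computation once, carefully, record the identity $(\delta_x, \phi^* f) = (\delta_{\phi(x)}, f)$ and $(\delta_x, A\phi^* f)$ etc., and then the rest follows by substitution and transposition. A one-line remark that everything is well defined because $\oS$ is finite (so all operators are genuine matrices and transposes are unambiguous) rounds it out.
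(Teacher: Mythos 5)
Your proof is correct and follows essentially the same route as the paper's: the paper's (very terse) argument likewise reduces everything to the matrix representation $\phi^*(x,y) = (\phi^*\delta_y)(x)$, i.e.\ pairing against the delta functions, with the transpose identity $(\phi^*)^T = (\phi^{-1})^*$ from the preceding lemma handling the equivalence with (3). Your version merely writes out the bookkeeping that the paper leaves implicit.
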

\begin{proof}
It is adequate to note that $\phi^*: \oS \times \oS \rightarrow \real$ is given by
\begin{equation*}
\phi^*(x,y) = (\phi^* \delta_y)(x) =
\begin{aligned}
\begin{cases}
	1 & \phi(x) = y\\
  	0 & \text{otherwise.}
\end{cases}
\end{aligned}
\end{equation*}
\end{proof}

If a bijection $\phi$ satisfies the conditions in the above lemma, then $\phi$ or the
associated $\phi^*$ is called a {\em symmetry} of $A \in \sL(\real^{\oS})$.
The following lemma is immediate.
\begin{lemma}\label{lem-phi-phiinv}
If the bijection $\phi:\oS \to \oS$ is a symmetry of $A \in \sL(\real^{\oS})$
then $\phi^{-1}$ is also a symmetry of $A$.
\end{lemma}

The previous lemmas apply to very general $A$. Now we focus on the case when $A$ (and hence $A^T$) has a one-dimensional null space, which is the case when $A$ is the generator of an ergodic Markov process.

\begin{lemma}
Let $A \in \sL(\real^{\oS})$ and suppose $A$ has a one-dimensional null space. Suppose $\phi$ is a symmetry of $A$.  Let $f \in \real^{\oS}$ span the null space of $A^T$. Then $\phi^* f = f$.
\end{lemma}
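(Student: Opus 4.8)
The plan is to use the one-dimensionality of $N(A^T)$ to reduce the claim to showing that $\phi^* f$ also lies in $N(A^T)$, and then to fix the scalar ambiguity using the fact that $\phi^*$ preserves positivity and normalization of the Perron–Frobenius-type eigenvector. First I would invoke Lemma~\ref{lem-phi-phiinv} (or equivalently the symmetry conditions): since $\phi$ is a symmetry of $A$, we have $A \circ \phi^* = \phi^* \circ A$, and taking transposes together with Lemma~\ref{lem:phistar-transpose} (which gives $(\phi^*)^T = (\phi^{-1})^*$) yields $A^T \circ (\phi^{-1})^* = (\phi^{-1})^* \circ A^T$; equivalently, by applying this to $\phi^{-1}$ in place of $\phi$ (Lemma~\ref{lem-phi-phiinv}), $A^T$ commutes with $\phi^*$. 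Then for $f$ spanning $N(A^T)$ we compute $A^T(\phi^* f) = \phi^*(A^T f) = \phi^*(0) = 0$, so $\phi^* f \in N(A^T)$.

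Next, since $N(A^T)$ is one-dimensional and contains the nonzero vector $f$, there is a scalar $c \in \real$ with $\phi^* f = c f$. To pin down $c$, I would use that $\phi$ is a bijection of the finite set $\oS$, so $\phi^*$ merely permutes the coordinates of $f$: in particular $\sum_{y \in \oS} (\phi^* f)(y) = \sum_{y \in \oS} f(\phi(y)) = \sum_{y \in \oS} f(y)$. If $f$ is taken to be the stationary distribution $\pi$ (which spans $N(L^T)$ and satisfies $\sum_y \pi(y) = 1 \neq 0$), this forces $c = 1$. For a general $A$ whose transpose has a one-dimensional null space spanned by some $f$, one has to be slightly more careful: if $\sum_y f(y) \neq 0$ the same argument gives $c = 1$ directly; if $\sum_y f(y) = 0$, then instead use that $\phi^*$ preserves the inner product (Lemma~\ref{lem:phistar-transpose}), so $(f,f) = (\phi^* f, \phi^* f) = c^2 (f,f)$ with $(f,f) > 0$, giving $c = \pm 1$, and then one rules out $c = -1$ by noting $\phi$ has finite order $m$ (as a permutation of a finite set), whence $(\phi^*)^m f = c^m f = f$, forcing $c^m = 1$ and hence $c = 1$ when $m$ is odd; for even $m$ the sign must be resolved by the positivity of the Perron eigenvector in the Markov application, which is the only case the paper actually needs.

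The main obstacle is this last sign/scalar bookkeeping: the cleanest statement holds when $f$ can be chosen with $\sum_y f(y) \neq 0$ (the Markov case, $f = \pi$), and I expect the intended proof simply normalizes $f$ that way and concludes $c = 1$ immediately from the permutation identity $\sum_y f(\phi(y)) = \sum_y f(y)$. I would therefore present the proof in that normalization: observe $\phi^* f \in N(A^T)$ by the commutation relation, write $\phi^* f = cf$, sum over $\oS$ to get $c = 1$, and remark that in the application $A = L$ and $f = \pi$ so the hypothesis $\sum_y f(y) \neq 0$ is automatic.
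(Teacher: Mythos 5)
Your proof follows the paper's argument exactly: commute $A^T$ with $\phi^*$ (via the symmetry of $\phi^{-1}$ and Lemma~\ref{lem:phistar-transpose}), conclude $\phi^* f \in N(A^T)$, and use one-dimensionality to write $\phi^* f = \alpha f$. Your caution about pinning down the scalar is well placed rather than pedantic: the paper disposes of it with the single sentence ``since $\phi$ is a bijection on $\oS$, $\alpha = 1$,'' which implicitly relies on $\sum_{y} f(y) \neq 0$ (automatic for $f=\pi$, the only case the paper uses), whereas for a general $A$ with one-dimensional null space the conclusion can fail --- e.g.\ the all-ones matrix on a two-point set with the swap symmetry has $N(A^T)$ spanned by $f=(1,-1)$ and $\phi^* f = -f$. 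So your analysis of the sign, and your suggestion to state the lemma under the normalization $\sum_y f(y)\neq 0$ (or positivity of $f$), actually repairs a gap in the paper's own one-line justification.
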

\begin{proof}
Because $\phi^{-1}$ is also a symmetry of $A$ by Lemma \ref{lem-phi-phiinv},
we have
\begin{equation*}
\begin{aligned}
A^T \circ \phi^* f = \phi^* \circ A^T f = 0.
\end{aligned}
\end{equation*}
That is, $\phi^* f$ is in the null space of $A^T$. Because $A^T$ has a one-dimensional null space, we have $\phi^* f = \alpha f$ for some $\alpha \neq 0$. Since $\phi$ is a bijection on $\oS$, $\alpha = 1$.
\end{proof}

Before we present the main result, we observe that we can write the null drift
condition as
\begin{equation*}
(\rho_i,\pi) = 0, \quad i=1,\dots,d,
\end{equation*}
where $\rho$ is the drift field.

\begin{theorem}\label{thm:sym-solvability}
Consider a graph $(\sS,\sE,\lambda)$ with generator (rate matrix) $L \in
\sL(\real^{\oS})$ (\ref{eqn:rate-matrix}) and stationary distribution
$\pi$~(\ref{eqn:stat-dist}). Suppose that for each $i=1,\dots,d$ there exists
a symmetry $\phi_i$ of $L$ such that $\phi_i^* \rho_i = -\rho_i$ and
$\phi_i^{-1} = \phi_i$. Then the null drift condition holds.
\end{theorem}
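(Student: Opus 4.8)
The plan is to reduce the statement directly to the chain of lemmas developed just above. Fix an index $i \in \{1,\dots,d\}$. Recall that $L$, being the generator of the ergodic projected process $Y$ (ergodicity established in Section~\ref{sect:preliminaries}), has a one-dimensional null space; hence so does $L^T$, and the latter is spanned by the stationary distribution $\pi$. Since $\phi_i$ is by hypothesis a symmetry of $L$, the lemma immediately preceding this theorem applies with $A = L$ and with $f = \pi$ spanning the null space of $L^T$, yielding $\phi_i^* \pi = \pi$.

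Next I would invoke Lemma~\ref{lem:phistar-transpose}, according to which $\phi_i^*$ preserves the inner product on $\real^{\oS}$. Combining this with $\phi_i^* \pi = \pi$ and with the hypothesis $\phi_i^* \rho_i = -\rho_i$ gives
\[
(\rho_i,\pi) = (\phi_i^* \rho_i,\phi_i^* \pi) = (-\rho_i,\pi) = -(\rho_i,\pi),
\]
so that $(\rho_i,\pi) = 0$. As observed just before the theorem statement, the null drift condition $\bar U = 0$ is precisely the collection of scalar identities $(\rho_i,\pi) = 0$ for $i = 1,\dots,d$; since $i$ was arbitrary, this completes the argument. I note in passing that the involution hypothesis $\phi_i^{-1} = \phi_i$ is not actually needed for this computation — it is merely consistent with $\phi_i^*\rho_i = -\rho_i$, since $(\phi_i^*)^2\rho_i$ must equal $\rho_i$ — so the essential requirements are that $\phi_i$ be a symmetry of $L$ and that it reverse the $i$-th component of the drift field.

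I do not anticipate a genuine obstacle: all the substance has been front-loaded into the lemmas about $\phi^*$, in particular the facts that a symmetry of $L$ fixes the null vector of $L^T$ and that pull-backs by bijections of $\oS$ are orthogonal transformations of $\real^{\oS}$. The only point deserving a moment's care is checking that the hypotheses of the ``one-dimensional null space'' lemma are in force — namely that $\ker L$ is one-dimensional — which is exactly the ergodicity of $Y$ guaranteed by Assumptions~1 and~2 together with the strong connectedness of $(\oS,\sE_\Pi)$.
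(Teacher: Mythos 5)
Your argument is exactly the paper's proof: the paper computes $(\rho_i,\pi) = (\phi_i^*\rho_i,\phi_i^*\pi) = (-\rho_i,\pi)$, relying implicitly on the same two facts you cite explicitly (that a symmetry of $L$ fixes the spanning vector $\pi$ of $N(L^T)$, and that $\phi_i^*$ preserves the inner product by Lemma~\ref{lem:phistar-transpose}). Your side observation that the involution hypothesis $\phi_i^{-1}=\phi_i$ is not actually used is also correct, since neither supporting lemma requires it.
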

\begin{proof}
For each $i=1,\dots,d$
\begin{equation*}
(\rho_i,\pi) = (\phi_i^* \rho_i,\phi_i^* \pi) = (-\rho_i, \pi).
\end{equation*}
Thus $(\rho_i,\pi)=0$.
\end{proof}
\begin{remark}
	Note that if we have a reflection symmetry about each direction $i$ and denote the reflection map by $\phi_i$, then the assumptions of Theorem~\ref{thm:sym-solvability} are satisfied.
\end{remark}

\section{Formal asymptotics}\label{sect:formal-derivation}
The homogenization result for PDEs is often motivated by a formal asymptotic
expansion, which leads to the so-called unit-cell
problems~\cite{keller1977effective,bensoussan2011asymptotic}. While this
procedure does not constitute a rigorous proof, it can be a useful tool.
In this section, we shall derive an analogous asymptotic expansion leading to unit-cell problems and a formula for the effective diffusivity.
\subsection{Anzats}
Denote the probability mass function of $Z_\eps(t)$ by $p_\eps(t,z)$, where $z \in \eps \sS$. The Kolmogorov forward equations are:
\begin{equation}\label{eqn:p_evolve}
\frac{\partial p_\eps}{\partial t}(t,z)
= \frac{1}{\eps^2} \sum_{e \in \E_y'} p_\eps(t,z-\eps \nu_e) \lambda_e
- \frac{1}{\eps^2} \lambda^0(y) p_\eps(t,z)
\end{equation}
for all $z \in \eps \sS$. We shall seek an asymptotic expansion of the form
\begin{equation}\label{eqn:p_asymp}
p_\eps(t,z) = p_0(t,z,y) + \eps p_1(t,z,y) + \eps^2 p_2(t,z,y) + \dots,
\end{equation}
where $p_j:[0,\infty) \times \real^d \times \oS \rightarrow [0,\infty)$ and $y = \Pi (z /\eps) \in \oS$. We assume that $p_j$ is sufficiently differentiable in $z$ for $j=0,1,2$. The machinery behind this asymptotic analysis involves substitution of the expansion (\ref{eqn:p_asymp}) into the forward equations (\ref{eqn:p_evolve}) and Taylor expansion about $z$, which yields:
\begin{equation}\label{eqn:asymp_taylor}
\begin{aligned}
\frac{\partial p_0}{\partial t}(t,z)
=&\; \frac{1}{\eps^2} \sum_{e \in \E_y'} \Big(p_0(t,z,y-\nu_e)\lambda_e + \eps p_1(t,z,y-\nu_e)\lambda_e + \eps^2 p_2(t,z,y-\nu_e)\Big)\lambda_e \\
&- \frac{1}{\eps} \sum_{e \in \E_y'} \Big(\nu_e^T \nabla_z  p_0(t,z,y-\nu_e)\lambda_e + \eps \nu_e^T \nabla_z  p_1(t,z,y-\nu_e)\Big)\lambda_e \\
&+ \frac{1}{2}\sum_{e \in \E_y'} \nu_e^T D_{zz} p_0(t,z,y-\nu_e) \nu_e \lambda_e \\
&- \frac{1}{\eps^2} \lambda^0(y) \Big(p_0(t,z,y) + \eps p_1(t,z,y) + \eps^2 p_2(t,z,y)\Big) + O(\eps).
\end{aligned}
\end{equation}
We note that we use $\nabla_z$ for the first derivative and $D_{zz}$ for the second derivative so that $D_{zz} p_0$ is the Hessian. Equating like powers of $\eps^{-2}, \eps^{-1}$, and $\eps^{0}$ in~(\ref{eqn:asymp_taylor}) yields a hierarchy of problems. Solving these problems yields the homogenized equation governing $p_0$, which is (formally) the probability density function describing the limiting motion of $Z_{\eps}$ as $\eps \rightarrow 0$.
\subsection{Stationary distribution}
Equating like powers of $\eps^{-2}$ in~(\ref{eqn:asymp_taylor}), we have:
\begin{equation*}
0 = - \lambda^0(y) p_0(t,z,y) + \sum_{e \in \sE_y'} p_0(t,z,y-\nu_e) \lambda_e,
\end{equation*}
for all $z \in \eps \sS$ and $y = \Pi (z /\eps) \in \oS$. This can be written as $L^T p_0(t,z,.) = 0$. Since $(\oS,\sE_{\Pi},\lambda)$ is a strongly connected graph, $Y$ is an ergodic Markov process and $L^T$ has a one-dimensional null space spanned by the stationary distribution $\pi \in \real^{\oS}$. Thus for any fixed $t,z$, there exists a constant $\bar{p}(t,z)$ such that
\begin{equation}\label{eqn:p0}
p_0(t,z,y) = \bar{p}(t,z)\pi(y),
\end{equation}
where
\begin{equation}\label{eqn:stat-dist}
(L^T \pi)(y) = 0, \quad
\sum_{y \in \oS} \pi(y) = 1.
\end{equation}


\subsection{Unit-cell problem}
Equating like powers of $\eps^{-1}$ in~(\ref{eqn:asymp_taylor}), we have
\begin{equation*}
\begin{aligned}
0 &= \sum_{e \in \E_y'} p_1(t,z,y-\nu_e) \lambda_e - \sum_{e \in \E_y'} \nu_e^T \nabla_z  \bar{p}(t,z)\lambda_e \pi(y-\nu_e) - \lambda^0(y) p_1(t,z,y)
\end{aligned}
\end{equation*}
for all $z \in \eps \sS$ and $y = \Pi (z /\eps) \in \oS$. Defining $\sigma \in (\real^d)^{\oS}$ by
\begin{equation}\label{eqn:sigma}
\sigma(y) \coloneqq \sum_{e \in \E_y'}  \nu_e \lambda_e \pi(y-\nu_e),
\end{equation}
this can be written as
\begin{equation*}
(L^T p_1(t,z,\cdot))(y) = \nabla_z  \bar{p}(t,z)^T \sigma(y).
\end{equation*}
Hence the relationship between $p_1$ and $\bar{p}$:
\begin{equation}\label{eqn:pbar-p1-relationship}
p_1(t,z,y) = \nabla_z  \bar{p}(t,z)^T \omega(y),
\end{equation}
where $\omega \in (\real^d)^{\oS}$ is a solution to the \emph{unit-cell problem}:
\begin{equation}\label{eqn:unitcell}
(L^T \omega)(y) = \sigma(y) \quad \text{for all } y \in \oS.
\end{equation}
The following lemma states that the unit-cell problem is solvable
if and only if null drift condition $\bar{U}=0$ holds.

\begin{lemma}\label{lem:unitcell-solvability}
The unit-cell problem~(\ref{eqn:unitcell}) is solvable if and only if null
drift condition holds. That is, if and only if
\begin{equation}\label{eqn:unitcell_solv}
\sum_{y \in \oS} \sum_{e \in \E_y}  \nu_e \lambda_e \pi(y) = 0.
\end{equation}
\end{lemma}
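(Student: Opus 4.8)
The plan is to reduce the solvability of $(L^T\omega)=\sigma$ to a Fredholm-type condition: since $L^T$ is the transpose of the generator $L$ of the ergodic process $Y$ on the finite state space $\oS$, its range is the orthogonal complement of $N(L)$ with respect to the inner product $(f,g)=\sum_{y\in\oS} f(y)g(y)$. First I would recall (as established in Section~\ref{sect:preliminaries}) that $L$ has a one-dimensional null space spanned by the constant function $\mathbf{1}$, because $(L\mathbf{1})(y)=\sum_{e\in\sE_y}(1-1)\lambda_e=0$ and strong connectivity of $(\oS,\sE_\Pi)$ forces uniqueness. Consequently $\sigma \in R(L^T)$ if and only if $\sigma \perp N(L)$, i.e.\ $\sum_{y\in\oS}\sigma(y)=0$ (componentwise in $\real^d$).

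The core of the argument is then the computation $\sum_{y\in\oS}\sigma(y)=\sum_{y\in\oS}\sum_{e\in\sE_y'}\nu_e\lambda_e\pi(y-\nu_e)$. Here I would apply the summation identity~\eqref{eqn:sum-of-sums}: for an edge $e\in\sE_y'$ terminating at $y$, the originating node is $\partial_-e = y-\nu_e$, so $\lambda_e\pi(y-\nu_e) = \lambda_e\pi(\partial_-e)$, and reindexing the double sum over terminal nodes into a sum over all of $\oE$ (equivalently, over originating nodes via~\eqref{eqn:sum-of-sums}) gives $\sum_{y\in\oS}\sum_{e\in\sE_y'}\nu_e\lambda_e\pi(\partial_-e) = \sum_{e\in\oE}\nu_e\lambda_e\pi(\partial_-e) = \sum_{y\in\oS}\sum_{e\in\sE_y}\nu_e\lambda_e\pi(y)$. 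This last quantity is precisely $\bar U$ as defined in~\eqref{eq-Ubar} (using $\rho(y)=\sum_{e\in\sE_y}\nu_e\lambda_e$), and it equals the left-hand side of~\eqref{eqn:unitcell_solv}. So $\sigma\in R(L^T) \iff \bar U = 0 \iff$~\eqref{eqn:unitcell_solv} holds.

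The only mild subtlety — and the step I would be most careful about — is the bookkeeping in the reindexing: one must check that the periodicity hypothesis needed for~\eqref{eqn:sum-of-sums} applies to the edge function $e\mapsto \nu_e\lambda_e\pi(\partial_-e)$. This is immediate since $\nu_e$, $\lambda_e$, and $\pi(\partial_-e)$ are all invariant under the integer-translation action on $\sE$ (Assumption~1 and periodicity of $\pi$ pulled back from $\oS$), so the function descends to $\oE$ and~\eqref{eqn:sum-of-sums} is legitimately invoked. With that in hand the proof is a two-line chain of equalities, and I would simply note that the equivalence $\sigma\perp N(L) \iff \sum_y\sigma(y)=0$ mirrors exactly the argument already used in the lemma preceding~\eqref{eqn:Lg_fd} for the solvability of $L\psi=\rho$.
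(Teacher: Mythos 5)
Your proposal is correct and follows essentially the same route as the paper's own proof: reduce solvability to $\sigma\perp N(L)$, note that $N(L)$ consists of constant functions so the condition is $\sum_{y\in\oS}\sigma(y)=0$, and then reindex via the identity~\eqref{eqn:sum-of-sums} using $\partial_-e=y-\nu_e$ to obtain~\eqref{eqn:unitcell_solv}. Your extra check that the edge function $e\mapsto\nu_e\lambda_e\pi(\partial_-e)$ is periodic is a welcome bit of care that the paper leaves implicit.
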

\begin{proof}
In order for the unit-cell problem to be solvable, it is necessary and sufficient that $\sigma$ be orthogonal to the null space of $L$. Because the null space of $L$ consists of constant functions, this is equivalent to $\sum_{y \in \oS} \sigma(y) = 0$. That is,
\begin{equation*}
\sum_{y \in \oS} \sum_{e \in \E_y'}  \nu_e \lambda_e \pi(y-\nu_e) =0.
\end{equation*}
Using (\ref{eqn:sum-of-sums}) and noting $y - \nu_e = \partial_-e$ for all $e \in \sE_y'$, this is equivalent to
\begin{equation*}
\sum_{y \in \oS} \sum_{e \in \E_y}  \nu_e \lambda_e \pi(y) =0.
\end{equation*}
\end{proof}

\subsection{Effective diffusivity}
We now show $\bar{p}$ is governed by the diffusion equation with effective diffusivity matrix, $K \in \real^{d \times d}$:
\begin{equation}
  \frac{\partial \bar{p}}{\partial t}(t,z) = \text{div}_z \cdot (K \nabla_z \bar{p}(t,z)).
\end{equation}
This suggests that $Z_{\eps}$ tends to a Brownian motion whose covariance matrix is given by $2Kt$. Before proceeding, we introduce an easily verifiable lemma.
\begin{lemma}\label{lem:trace}
For any symmetric $A \in \real^{d \times d}$ and $x,y \in \real^d$,
\begin{equation*}
  \mathrm{trace}\big(A D_{zz}\bar{p}(t,z)\big) = \textup{div}_z  \big(A \;\nabla_z  \bar{p}(t,z)\big)
\end{equation*}
and
\begin{equation*}
  \mathrm{trace}(A x y^T) = x^T A y.
\end{equation*}
\end{lemma}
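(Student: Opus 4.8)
The plan is to verify both identities by a direct computation in coordinates, using the cyclic invariance of the trace and the fact that the matrix $A$ does not depend on the spatial variable $z$.

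First I would dispatch the second, purely algebraic identity. Writing $A=(A_{ij})$, the cyclic property of the trace gives $\mathrm{trace}(Axy^T)=\mathrm{trace}(y^TAx)=y^TAx$, which is a scalar and hence equals its own transpose $x^TA^Ty$; symmetry of $A$ then yields $x^TAy$. (Equivalently, expanding directly, $\mathrm{trace}(Axy^T)=\sum_i\sum_j A_{ij}x_jy_i=y^TAx$.) Next I would treat the first identity componentwise. By definition $D_{zz}\bar p(t,z)$ is the Hessian with entries $\partial^2\bar p/\partial z_i\partial z_j$, and since $\bar p$ is assumed twice continuously differentiable in $z$ this matrix is symmetric. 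Then
\[
\mathrm{trace}\big(A D_{zz}\bar p\big)=\sum_i (A D_{zz}\bar p)_{ii}=\sum_i\sum_j A_{ij}\,\frac{\partial^2\bar p}{\partial z_i\partial z_j}.
\]
On the other hand, $A\nabla_z\bar p$ is the vector whose $i$-th component is $\sum_j A_{ij}\,\partial\bar p/\partial z_j$, so, because $A$ is constant in $z$,
\[
\textup{div}_z\big(A\nabla_z\bar p\big)=\sum_i\frac{\partial}{\partial z_i}\Big(\sum_j A_{ij}\frac{\partial\bar p}{\partial z_j}\Big)=\sum_i\sum_j A_{ij}\frac{\partial^2\bar p}{\partial z_i\partial z_j},
\]
which is the same sum, establishing the claim.

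There is no genuine obstacle here; the only points worth a word of care are that $A$ is independent of $z$, so it passes through $\partial/\partial z_i$, and that $\bar p$ is $C^2$ in $z$, which is already part of the standing regularity assumed for the asymptotic ansatz. Note that symmetry of $A$ is used only in the second identity, and is not strictly needed in the first since the two double sums agree term by term.
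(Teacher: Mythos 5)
Your proof is correct, and the paper itself offers no proof at all---it introduces this as ``an easily verifiable lemma''---so your direct coordinate computation is precisely the verification the authors intended the reader to supply. Your side remarks (that $A$ must be constant in $z$ to pass through the derivative, and that symmetry of $A$ is only needed for the second identity given the symmetry of the Hessian) are accurate.
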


Balancing the $\eps^0$ terms in~(\ref{eqn:asymp_taylor}) yields
\begin{equation*}
\begin{aligned}
\frac{\partial \bar{p}}{\partial t}(t,z) \pi(y)
&= (L^T p_2(t,z,.))(y) - \sum_{e \in \E_y'} \nu_e^T \nabla_z  p_1(t,z,y-\nu_e) \lambda_e \\
&+ \frac{1}{2} \sum_{e \in \E_y'} \nu_e^T D_{zz} \bar{p}(t,z) \nu_e \lambda_e \pi(y-\nu_e)
\end{aligned}
\end{equation*}
for all $z \in \eps \sS$ and $y = \Pi (z /\eps) \in \oS$. Since the range of $L^T$ consists of functions that sum to zero, summing the above over $y \in \oS$ eliminates $p_2$ terms and yields
\begin{equation*}
\frac{\partial \bar{p}}{\partial t}(t,z)
= \frac{1}{2} \sum_{y \in \oS} \sum_{e \in \E_y'} \nu_e^T D_{zz} \bar{p}(t,z) \nu_e \lambda_e \pi(y-\nu_e)
- \sum_{y \in \oS} \sum_{e \in \E_y'} \nu_e^T D_{zz} \bar{p}(t,z) \omega(y-\nu_e) \lambda_e,
\end{equation*}
where we have used (\ref{eqn:pbar-p1-relationship}). Use (\ref{eqn:sum-of-sums}) to rewrite this as
\begin{equation*}
\begin{aligned}
\frac{\partial \bar{p}}{\partial t}(t,z)
= \frac{1}{2} \sum_{y \in \oS} \sum_{e \in \E_y} \nu_e^T D_{zz} \bar{p}(t,z) \nu_e \lambda_e \pi(y)
- \sum_{y \in \oS} \sum_{e \in \E_y} \nu_e^T D_{zz} \bar{p}(t,z) \omega(y) \lambda_e.
\end{aligned}
\end{equation*}
Then by symmetry of $D_{zz} \bar{p}(t,z)$ and Lemma~\ref{lem:trace},
\begin{equation*}
\begin{aligned}
  \frac{\partial \bar{p}}{\partial t}(t,z)
  &= \frac{1}{2} \mathrm{trace}\bigg[D_{zz}\bar{p}(t,z) \sum_{y \in \oS} \sum_{e \in \E_y} \Big(
    \nu_e \nu_e^T \lambda_e \pi(y) - \nu_e \omega(y)^T \lambda_e - \omega(y) \nu_e^T \lambda_e\Big)\bigg] \\
  &= \text{div}_z  (K \nabla_z  \bar{p}(t,z)),
\end{aligned}
\end{equation*}
where
\begin{equation}\label{eqn:K}
K \coloneqq \frac{1}{2} \sum_{y\in\oS} \sum_{e \in \sE_y} \Big(\nu_e \nu_e^T  \lambda_e \pi(y) - \nu_e \omega(y)^T\lambda_e - \omega(y)\nu_e^T\lambda_e\Big).
\end{equation}
In Appendix A, we show that $K=C$ where $C$ is the diffusivity obtained by the
rigorous result.

\section{Variational formulation for the reversible rates  case $L = L^T$}\label{sect:variational-formulation}
When the rates are reversible, the generator $L$ is symmetric. We note that
this also means that reverse edges are present: if $(x,y) \in \oE$ then
$(y,x) \in \oE$. In the
continuous (diffusion PDE) homogenization case, where the analogous differential operator $L$ is symmetric, the effective diffusivity may be characterized by a variational formulation \cite{kozlov1980homogenization}. Likewise in the discrete lattice ($\integ^d$) setting with reversible rates, a variational characterization may be found in \cite{caputo2003finite}.

We shall derive a variational characterization of the effective diffusivity in
our graph setting for the symmetric (reversible rates) case. Thus we assume $L
= L^T$ throughout this section. In the continuous case of the diffusion PDE, the variational characterization involves the use of results from
vector calculus, such as the divergence theorem and the fact that the
generator
can be written as $Lf = \textup{div}(A \nabla f)$ where $A$ is the (local)
diffusivity matrix. In the lattice
case~\cite{caputo2003finite}, due to the Cartesian structure, the discrete
analog of the gradient and divergence operators are relatively easy to
define. 

In the general graph setting considered here, we develop the analogous
operators of gradient and divergence and an equivalent of the local diffusivity
$A$ in order
to provide the variational formulation. In our setting, the gradient operator
$\nabla$ applies to node functions, the divergence applies to
$\real^d$-valued edge functions (edge vector fields) and $A$ is a matrix valued function of the
edges. Our approach will be self-contained and limited to our goal of 
deriving a variational characterization in terms of divergence and gradient 
analogous to the diffusion PDE case. There is substantial literature 
on the development of what is termed ``discrete calculus'', see \cite{grady2010discrete}
 for a textbook introduction. There is also substantial overlapping literature 
under the title {\em discrete exterior calculus} which develops a discrete 
analog of the exterior calculus of differential forms on manifolds. 
While the domain of exterior differential calculus is the differentiable
manifold, the discrete exterior calculus is developed on simplicial complexes. See for instance
\cite{hirani2003discrete, desbrun2005discrete}. Some motivation for the
development of discrete calculus on graphs has also stemmed from applications
in machine learning, see  
\cite{zhou2005learning} for instance. Most of the works in the literature on discrete calculus 
do not regard the underlying graphs or simplicial complexes as embedded 
in a Euclidean space or a manifold and therefore are purely topological. 
Our approach differs in that aspect since we regard the graph as embedded 
in $\real^d$ and thus our calculus includes the geometry via edge displacements $\nu_e$ as well as topology.
The work in \cite{friedman2004calculus} regards the edges of the graph as 
one dimensional line segments and hence the calculus developed does involve 
geometry. The approach in \cite{friedman2004calculus} appears 
to differ 
from ours in that while our edges have an associated displacement 
$\nu_e \in \real^d$, we do not speak of points on the interior of an edge. 
       

Throughout this section, we use the same notation for the inner products
defined on $\real^{\oS}$ and $(\real^d)^{\oE}$ by
\begin{equation}
(f,g) = \sum_{y \in \oS} f(y) g(y)
\end{equation}
for $f,g \in \real^{\oS}$ and
\begin{equation}
(f,g) = \sum_{e \in \oE} f(e)^T g(e)
\end{equation}
for $f,g \in (\real^d)^{\oE}$. The space in which an inner product applies 
should be clear from the context. 

\subsection{Gradient and divergence}
We introduce gradient and divergence as they apply
to our graph setting. We note that our definitions of gradient and divergence depend 
only on the topology and geometry (via the embedding in $\real^d$) of the graph. Thus the graph connectivity as
well as $\nu_e$ play a role in the definition, but the jump rates (weights) 
$\lambda_e$ do not enter into these definitions. Thus our definitions differ 
from that of \cite{zhou2005learning} where jump rates are incorporated into the
definitions and moreover \cite{zhou2005learning} does not regard the graph as
embedded in a Euclidean space.  

Let the \emph{gradient} $\nabla: \real^{\oS} \rightarrow (\real^d)^\E$ be defined by
\begin{align}
(\nabla f)(e) = (f(\partial_+ e) - f(\partial_- e))\frac{\nu_e}{|\nu_e|^2}
  \quad \forall f \in \real^{\oS}.
\end{align}
We observe that $(f(\partial_+ e) - f(\partial_- e))/|\nu_e|$ is the 
divided difference of $f$ along the edge $e$ and this is multiplied by the unit vector
$\nu_e/|\nu_e|$ to obtain $(\nabla f)(e)$. 

Let the \emph{divergence} $\textup{div}: (\real^d)^{\oE} \rightarrow \real^{\oS}$ be defined by
\begin{equation}
(\textup{div} f)(y) = \sum_{e \in \E_y} \frac{\nu_e^T f(e)}{|\nu_e|^2} -
  \sum_{e \in \E'_y} \frac{\nu_e^T f(e)}{|\nu_e|^2} \quad \forall f \in (\real^d)^{\oE}.
\end{equation}
A rough explanation for the intuition behind our definition of divergence is as follows. 
Imagine a ``vanishingly thin'' finite cylinder associated to each edge $e$ so that the cylinder 
has as its axis the line joining the end points of the edge with the length 
of the axis given by $|\nu_e|$. 
The ``flux'' due to $f$ along an edge $e$ can be thought of as flowing 
along the cylinder and proportional to $\nu_e^T f(e)/|\nu_e|$, the component of
$f(e)$ along the axis. In order to compute the contribution of this flux 
to the divergence at $y=\partial_-e$, we normalize this flux by the ratio of the volume of
the cylinder to the area of its cross section. This leads to the additional factor $1/|\nu_e|$. 
We note that the incoming fluxes at a node $y$ are to be subtracted from the
sum of outgoing fluxes from $y$, resulting in the above formula.

\begin{lemma}[Divergence theorem]\label{lem:divergence}
Let $f \in (\real^d)^{\oE}$ and $g \in \real^{\oS}$. 
Then
\begin{equation*}
(f,\nabla g) = -(\textup{div}f,g).
\end{equation*}
\end{lemma}
\begin{proof}
We have
\[
\begin{aligned}
\sum_{y\in\oS} &(\textup{div}f)(y) g(y) = \sum_{y \in \oS} \sum_{e \in \sE_y}
\frac{\nu_e^T f(e)}{|\nu_e|^2} g(y) - \sum_{y \in \oS} \sum_{e \in \sE'_y}
\frac{\nu_e^T f(e)}{|\nu_e|^2} g(y)\\
&=   \sum_{e \in \oE}
\frac{f(e)^T \nu_e}{|\nu_e|^2} g(\partial_-e) - \sum_{e \in \oE} 
\frac{f(e)^T \nu_e}{|\nu_e|^2} g(\partial_+e)\\
&= - \sum_{e \in \oE} \frac{f(e)^T \nu_e}{|\nu_e|^2}
(g(\partial_+e)-g(\partial_-e))
 = -\sum_{e \in \oE} f(e)^T (\nabla g)(e).
\end{aligned}
\]
\end{proof}

\begin{remark} 
The definitions of gradient and divergence as well as the divergence theorem 
do not depend on the jump rates (edge weights) $\lambda_e$. 
\end{remark} 

\subsection{Unit-cell problem in terms of divergence and gradient} 
In the diffusion PDE setting, where the generator $L$ is symmetric ($L^T=L$), 
one may write 
\[
Lf =  \textup{div}(A (\nabla f)),
\]
where $A$ is the diffusivity matrix. We derive the same expression for 
the generator of the process $Z$ in the case of reversible rates.  

We define the matrix valued edge function $A: \oE \rightarrow \real^{d\times d}$ by
\begin{equation}
	A(e) = \frac{1}{2} \nu_e \nu_e^T \lambda_e,
\end{equation}
which will be used throughout this section. 
We observe that one may regard the {\em local diffusivity matrix} at $y \in
\oS$ of the process $Z$ to 
be given by half the conditional rate of change of covariance,
\[
\frac{1}{2} \lim_{h \to 0+} \textup{Cov}((Z(t+h)-Z(t)) /h \, | \, Z(t)=y).
\] 
Noting that 
\[
\begin{aligned}
\textup{Cov}(Z(t+h)-Z(t)) &= \Exp\left((Z(t+h)-Z(t))(Z(t+h)-Z(t))^T\right)\\
&\hspace{2em} -\Exp(Z(t+h)-Z(t)) \, \Exp(Z(t+h)-Z(t))^T\\ &= \sum_{e \in \sE_y} \nu_e \nu_e^T
\lambda_e h + o(h) \quad \text{ as } h \to 0+,
\end{aligned}
\]
we obtain that 
\[
\frac{1}{2} \lim_{h \to 0+} \textup{Cov}((Z(t+h)-Z(t)) /h \, | \, Z(t)=y) =
\sum_{e \in \sE_y} A(e).
\]
Thus $A(e)$ is the contribution to the local diffusivity at the node $\partial_-e$ 
from the edge $e$.  

\begin{lemma}\label{lem:Asymm}
If $f,g \in (\real^d)^{\oE}$, then
\[
(f,Ag) = (Af,g),
\]
where $Af$ and $Ag$ refer to the $\real^d$ valued edge functions 
given by $(Af)(e)=A(e)f(e)$ and $(Ag)(e)=A(e)g(e)$. 
\end{lemma}
\begin{proof} 
\[
(f,Ag) = \frac{1}{2} \sum_{e \in \oE} f(e)^T \nu_e \nu_e^T \lambda_e g(e) =
\frac{1}{2} \sum_{e \in
  \oE} g(e)^T \nu_e \nu_e^T \lambda_e f(e) = (g,Af)=(Af,g).
\]
\end{proof}
 
\begin{lemma}\label{lem:Ldivgrad}
Under the reversible rates assumption ($L^T=L$) we have
\begin{equation}
(Lf)(y) = \textup{div}(A \nabla f)(y),
\end{equation}
for all $f \in \real^{\oS}$ and $y \in \oS$. 
\end{lemma}  
\begin{proof}
In this proof $e'$ denotes the reversal of an edge $e$. We have
$\nu_{e'}=-\nu_e$ and $(\nabla f)(e')=(\nabla f)(e)$, and under the reversible
rates assumption we also have that $A(e')=A(e)$.  Thus
\[
\begin{aligned}
\textup{div}(A \nabla f)(y) &=  \sum_{e \in \sE_y} \frac{\nu_e^T A(e)
  (\nabla f)(e)}{|\nu_e|^2} -   \sum_{e \in \sE'_y} \frac{\nu_e^T A(e)(\nabla
  f)(e)}{|\nu_e|^2}\\
&=  \sum_{e \in \sE_y} \left(\frac{\nu_e^T A(e)(\nabla
  f)(e)}{|\nu_e|^2}-\frac{\nu_{e'}^T A(e')(\nabla f)(e')}{|\nu_{e'}|^2}\right)\\
&= 2 \sum_{e \in \sE_y} \frac{\nu_e^T A(e)(\nabla f)(e)}{|\nu_e|^2}\\
&=  \sum_{e \in \sE_y} (f(\partial_+e)-f(\partial_-e)) \lambda_e =  (Lf)(y).
\end{aligned}
\]
\end{proof}

Given $\xi \in \real^d$ we define the {\em modified unit-cell problem} to be 
the problem of finding $\Upsilon_{\xi} \in \real^{\oS}$ such that
\begin{equation}\label{eqn:unitcell2}
(L^T\Upsilon_{\xi})(y) = \xi^T\sigma(y) \text{ for all } y \in \oS.
\end{equation}
Solving the modified unit-cell problem for a basis of vectors $\xi$ is 
equivalent to solving the original problem \eqref{eqn:unitcell} since
by the linearity of the unit-cell problem (\ref{eqn:unitcell}),
$\Upsilon_{\xi}(y) = \xi^T \omega(y)$ where $\omega\in \real^{\oS}$ is a
solution to the unit-cell problem.

The following theorem formulates the modified unit-cell problem in terms of $\nabla$ and div.
\begin{theorem}[Unit-cell]\label{t:unitcell}
Let $\xi \in \real^d$. Then $\Upsilon_{\xi}\in \real^{\oS}$ solves the
modified unit-cell problem \eqref{eqn:unitcell2}
if and only if $\Upsilon_{\xi}$ satisfies
\begin{equation}\label{eqn:unitcelldiv}
\textup{div}\left(A \left(\nabla\Upsilon_{\xi} + \frac{1}{|\oS|}\xi\right)\right)(y) = 0 \text{ for all } y \in \oS.
\end{equation}

\end{theorem}
\begin{proof}
It is adequate to show that $-|\oS| \xi^T \sigma(y) = \textup{div}(A \xi)(y)$. 
Noting that $\pi = 1/|\oS|$, we have
\[
\begin{aligned}
- |\oS| \xi^T \sigma(y)
&= -\sum_{e \in \sE_y'} |\oS| \lambda_e \pi(y-\nu_e) \nu_e^T \xi\\
&= -\sum_{e \in \sE_y'} \lambda_{e} \nu_{e}^T \xi
= \sum_{e \in \sE_y} \lambda_e \nu_e^T \xi,
\end{aligned}
\]
where we have used the reversible rates assumption in the last step. 
On the other hand, noting that $e'$ denotes the reversal of an edge $e$,
we have
\[
\begin{aligned}
\textup{div}(A \xi)(y) &= \sum_{e \in \sE_y} \frac{\xi^T A^T(e)
  \nu_e}{|\nu_e|^2} - \sum_{e \in \sE'_y} \frac{\xi^T A^T(e)
  \nu_e}{|\nu_e|^2}\\
&= \sum_{e \in \sE_y} \left( \frac{\xi^T A^T(e) \nu_e}{|\nu_e|^2} -
\frac{\xi^T A^T(e') \nu_{e'}}{|\nu_{e'}|^2}\right)\\
&= 2 \sum_{e \in \sE_y} \frac{\xi^T A^T(e) \nu_e}{|\nu_e|^2}
= \sum_{e \in \sE_y} \xi^T \nu_e \lambda_e.
\end{aligned}
\]
\end{proof}

\subsection{Minimization of energy}
Having reformulated the unit-cell problem 
in terms of gradient, divergence and local diffusivity, we are ready to
state the main theorem of this section. 

For any fixed $\xi \in \real^d$ we define the associated {\em energy} function
$E_{\xi}:\real^{\oS} \to \real$ by
\begin{equation}
E_{\xi}(f) = \frac{1}{2}\left(A \left(\nabla f +
\frac{1}{|\oS|}\xi\right),\nabla f + \frac{1}{|\oS|}\xi \right),
\end{equation}
where $f \in \real^{\oS}$. 
\begin{theorem}[Variational Formulation]
Fix $\xi \in \real^d$. Then $f^*$ minimizes the energy $E_{\xi}$,
\begin{equation*}
E_{\xi}(f^*) = \min_{f \in \real^{\oS}} E_{\xi}(f),
\end{equation*}
if and only if $f^*$ is a solution to the modified unit-cell problem~(\ref{eqn:unitcell2}) and thus $f^* = \Upsilon_{\xi}$.
\end{theorem}
\begin{proof}
We note that the Hessian of $E_{\xi}$ is positive semi-definite and thus it is
sufficient to consider the first
order optimality condition. 
For $f,g \in \real^{\oS}$, the derivative of $E_{\xi}$ is given by
\begin{equation*}
\begin{aligned}
DE_{\xi}(f)(g)
&= 	\frac{1}{2}\Big(\nabla g,A(\nabla f + \frac{1}{|\oS|}\xi)\Big) + \frac{1}{2}\Big(\nabla f + \frac{1}{|\oS|}\xi,A\nabla g\Big) \\
&= \Big(\nabla g,A(\nabla f + \frac{1}{|\oS|}\xi)\Big),
\end{aligned}
\end{equation*}
where the last equality follows from Lemma \ref{lem:Asymm}. Then
$DE_{\xi}(f^*) = 0$ if and only if 
\begin{equation*}
\Big(\nabla g, A(\nabla f^* + \frac{1}{|\oS|} \xi)\Big) = 0 \quad
\forall g \in \real^{\oS}.
\end{equation*}
By Lemma~\ref{lem:divergence}, this is equivalent to
\begin{equation*}
\Big(g, \text{div} \big( A(\nabla f^* + \frac{1}{|\oS|} \xi)\big) \Big)
= 0 \quad \forall g \in \real^{\oS}.
\end{equation*}
That is, $f^*$ minimizes $E_{\xi}$ if and only if
\begin{equation*}
\text{div} \big( A(\nabla f^* + \frac{1}{|\oS|} \xi)\big) = 0.
\end{equation*}
By Theorem~\ref{t:unitcell}, the desired result follows. 

\end{proof}

\subsection{Effective diffusivity matrix $K$ in terms of gradient and divergence}
 
We first state a useful lemma before providing a characterization of the  
effective diffusivity $K$ in terms of gradient and divergence. 

\begin{lemma}\label{lem:prediv}
Let $f \in \real^{\oE}$ and $g \in \real^{\oS}$. Suppose $f(x,y) = -f(y,x)$ for all $(x,y) \in \oE$. Then
\begin{equation*}
\sum_{e \in \oE} f(e)(g(\partial_+e) - g(\partial_-e))
= \sum_{e \in \oE} - 2f(e)g(\partial_-e)
\end{equation*}
\end{lemma}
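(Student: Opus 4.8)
The plan is to compute $\sum_{e \in \oE} f(e)\, g(\partial_+ e)$ in two ways, using the summation identity \eqref{eqn:sum-of-sums} once with edges grouped by their \emph{terminal} node and once with edges grouped by their \emph{originating} node, and letting the antisymmetry of $f$ bridge the two. First I would note that since $f$ is defined on equivalence classes its lift to $\sE$ is automatically integer-periodic, and $g$ (read modulo $\integ^d$, as everywhere in the paper) is too, so $e \mapsto f(e)\,g(\partial_+ e)$ is a periodic edge function and \eqref{eqn:sum-of-sums} applies to it. Grouping by terminal node and using that $\partial_+ e = y$ for every $e \in \sE'_y$ gives
\[
\sum_{e \in \oE} f(e)\, g(\partial_+ e) \;=\; \sum_{y \in \oS} \sum_{e \in \sE'_y} f(e)\, g(\partial_+ e) \;=\; \sum_{y \in \oS} g(y) \sum_{e \in \sE'_y} f(e).
\]

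Next I would invoke edge reversal. Because $L = L^T$ forces every edge of $\oE$ to have its reversal in $\oE$ (the remark at the start of this section), the reversal map $e \mapsto e'$ restricts, for each fixed $y \in \oS$, to a bijection $\sE'_y \to \sE_y$: an edge terminating at $y$ is sent to an edge originating at $y$, and conversely. Reindexing the inner sum along this bijection and using the hypothesis $f(e) = -f(e')$ yields $\sum_{e \in \sE'_y} f(e) = -\sum_{e \in \sE_y} f(e)$. Substituting this back and applying \eqref{eqn:sum-of-sums} once more, now in the form $\sum_{e \in \oE}(\cdot) = \sum_{y \in \oS}\sum_{e \in \sE_y}(\cdot)$ together with $\partial_- e = y$ on $\sE_y$, gives
\[
\sum_{e \in \oE} f(e)\, g(\partial_+ e) \;=\; -\sum_{y \in \oS} g(y) \sum_{e \in \sE_y} f(e) \;=\; -\sum_{e \in \oE} f(e)\, g(\partial_- e).
\]
Finally, subtracting,
\[
\sum_{e \in \oE} f(e)\big(g(\partial_+ e) - g(\partial_- e)\big) \;=\; -\sum_{e \in \oE} f(e)\, g(\partial_- e) - \sum_{e \in \oE} f(e)\, g(\partial_- e) \;=\; \sum_{e \in \oE} -2 f(e)\, g(\partial_- e),
\]
which is the assertion.

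There is no deep obstacle here; the proof is a short bookkeeping argument. The only points that need care are (i) keeping the two distinct uses of \eqref{eqn:sum-of-sums} straight — one indexing by terminal node, one by originating node — and (ii) checking that reversal genuinely defines a bijection $\sE'_y \leftrightarrow \sE_y$, which is precisely where the reversibility (symmetry of $L$) assumption of this section is used; without it the reindexing step in the middle would be unavailable.
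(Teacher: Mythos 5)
Your proof is correct and follows essentially the same route as the paper's: split off $\sum_{e \in \oE} f(e)g(\partial_+e)$, reindex it by terminal node via \eqref{eqn:sum-of-sums}, and use the edge-reversal bijection together with the antisymmetry of $f$ to convert it to $-\sum_{e \in \oE} f(e)g(\partial_-e)$. You merely make explicit the reversal/antisymmetry step that the paper's terse chain of equalities leaves implicit.
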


\begin{proof}
\begin{align*}
\sum_{e \in \oE} f(e)(g(\partial_+e) - g(\partial_-e))
= \sum_{e \in \oE} f(e)g(\partial_+e)
	-\sum_{e \in \oE} f(e)g(\partial_-e) \\
= \sum_{y \in \oS }\sum_{e \in \E_y'} f(e)g(\partial_+e)
	-\sum_{e \in \oE} f(e)g(\partial_-e)
= \sum_{e \in \oE} - 2f(e)g(\partial_-e).
\end{align*}
\end{proof}

\begin{lemma}\label{lem:K-rep}
Define $K$ as in (\ref{eqn:K}). Then
\begin{equation*}
K\xi = \sum_{y\in\oS} \sum_{e \in \sE_y} A(e)(\nabla\Upsilon_{\xi}(e) + \frac{1}{|\oS|}\xi), \quad \forall \xi \in \real^d.
\end{equation*}
\end{lemma}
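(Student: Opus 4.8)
The plan is to start from the definition of $K$ in~\eqref{eqn:K}, apply it to an arbitrary $\xi \in \real^d$, and massage the resulting sums until they match the claimed expression. First I would write
\[
K\xi = \frac{1}{2}\sum_{y\in\oS}\sum_{e\in\sE_y}\Big(\nu_e\nu_e^T\lambda_e\pi(y)\,\xi - \nu_e\,\omega(y)^T\lambda_e\,\xi - \omega(y)\,\nu_e^T\lambda_e\,\xi\Big),
\]
and recall that in the reversible case $\pi(y) = 1/|\oS|$ is constant, and that by Theorem~\ref{t:unitcell} we have $\Upsilon_\xi(y) = \xi^T\omega(y) = \omega(y)^T\xi$. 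Using $A(e) = \nu_e\nu_e^T\lambda_e$, the first term is $\frac{1}{|\oS|}A(e)\xi$. The plan is then to recognize the remaining two terms as, respectively, $-\nu_e(\nabla\Upsilon_\xi)(e)$-type contributions and a term that vanishes upon summation over $e \in \sE_y$ for each fixed $y$ (or after a global rearrangement).

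More precisely, the key identity I expect to use is that, for fixed $y$,
\[
\sum_{e\in\sE_y}\nu_e\nu_e^T\lambda_e\,(\nabla\Upsilon_\xi)(e) = \sum_{e\in\sE_y}\nu_e\nu_e^T\lambda_e\,(\Upsilon_\xi(\partial_+e) - \Upsilon_\xi(\partial_-e))\frac{\nu_e}{|\nu_e|^2} = \sum_{e\in\sE_y}\nu_e\,\lambda_e\,(\omega(\partial_+e) - \omega(\partial_-e))^T\xi,
\]
since $\nu_e^T\nu_e = |\nu_e|^2$ cancels one factor. So $A(e)(\nabla\Upsilon_\xi)(e) = \nu_e\lambda_e(\omega(y+\nu_e) - \omega(y))^T\xi$ for $e \in \sE_y$ (recall $\partial_-e = y$, $\partial_+e = y+\nu_e$ mod $1$). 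Thus
\[
\frac{1}{2}\sum_{y\in\oS}\sum_{e\in\sE_y}A(e)\Big(\nabla\Upsilon_\xi(e) + \tfrac{1}{|\oS|}\xi\Big) = \frac{1}{2}\sum_{y\in\oS}\sum_{e\in\sE_y}\Big(\nu_e\lambda_e\,\omega(y+\nu_e)^T\xi - \nu_e\lambda_e\,\omega(y)^T\xi + \tfrac{1}{|\oS|}A(e)\xi\Big).
\]
Comparing with $K\xi$ above, matching the $\frac{1}{|\oS|}A(e)\xi$ terms is immediate, and matching $-\nu_e\,\omega(y)^T\lambda_e\,\xi$ is immediate; what remains is to show
\[
-\sum_{y\in\oS}\sum_{e\in\sE_y}\omega(y)\,\nu_e^T\lambda_e\,\xi = \sum_{y\in\oS}\sum_{e\in\sE_y}\nu_e\lambda_e\,\omega(y+\nu_e)^T\xi.
\]
This is where I would invoke the re-indexing identity~\eqref{eqn:sum-of-sums}: the right-hand side, summed over all of $\oE = \cup_y \sE_y$, can be rewritten by grouping edges according to their terminal node, $\sum_{y\in\oS}\sum_{e\in\sE_y'}\nu_e\lambda_e\,\omega(y)^T\xi$, and then using $\nu_e = -(\text{jump of the reversal})$ together with the fact (noted at the start of Section~\ref{sect:variational-formulation}) that in the reversible case every edge's reversal is present with the same rate, while $\pi$ is constant; equivalently one uses $\sum_{e\in\sE_y}\nu_e\lambda_e = -\rho$-type bookkeeping combined with reversibility to flip originating and terminal roles. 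The main obstacle I anticipate is precisely this last bookkeeping step: carefully tracking which node $\omega$ is evaluated at after the re-indexing, and confirming that the reversibility hypothesis $L = L^T$ (hence presence of reverse edges and $\lambda_e = \lambda_{e'}$, $\pi$ constant) is exactly what makes the telescoping/re-indexing close up — without reversibility the identity would fail. Once that identity is established, the lemma follows by collecting terms.

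I should also double-check the edge cases in the re-indexing: that $\oE$ may not inject into $\oS\times\oS$ and $\kappa$ need not be a bijection does not affect~\eqref{eqn:sum-of-sums}, which is stated for arbitrary periodic edge functions, so the manipulation is legitimate as written. I would present the computation as a short chain of displayed equalities, citing~\eqref{eqn:sum-of-sums}, Theorem~\ref{t:unitcell} for $\Upsilon_\xi = \xi^T\omega$, the constancy of $\pi$, and the definition of $\nabla$, and conclude $K\xi = \frac{1}{2}\sum_{y\in\oS}\sum_{e\in\sE_y}A(e)(\nabla\Upsilon_\xi(e) + \frac{1}{|\oS|}\xi)$.
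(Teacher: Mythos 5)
Your proposal follows essentially the same route as the paper's proof: expand $K\xi$ using $\pi \equiv 1/|\oS|$, observe that $A(e)(\nabla\Upsilon_\xi)(e) = \nu_e\lambda_e\big(\Upsilon_\xi(\partial_+e)-\Upsilon_\xi(\partial_-e)\big)$, and eliminate the $\partial_+e$ evaluation by re-indexing over terminal nodes and using edge reversal with $\nu_{e'}=-\nu_e$ and $\lambda_{e'}=\lambda_e$ --- which is exactly the content of the paper's Lemma~\ref{lem:prediv}. One caveat, which applies equally to the paper's own proof: after the re-indexing the identity you still need is $\sum_{y}\sum_{e\in\sE_y}\omega(y)\nu_e^T\lambda_e\,\xi=\sum_{y}\sum_{e\in\sE_y}\nu_e\,\omega(y)^T\lambda_e\,\xi$, i.e.\ symmetry of the matrix $\sum_y\rho(y)\omega(y)^T$ (so that the two cross terms in \eqref{eqn:K} contribute equally to $K\xi$); this is not pure bookkeeping, but it does hold here because $L=L^T$ forces $\omega=-\psi/|\oS|+\mathrm{const}$ and $(L\psi_i,\psi_j)=(\psi_i,L\psi_j)$, and it deserves an explicit line of justification.
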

\begin{proof}
Note that from Lemma \ref{lem:prediv}
\begin{align*}
&\sum_{y\in\oS} \sum_{e \in \sE_y} -\nu_{e} \lambda_{e}\Upsilon_{\xi}(y)
= \sum_{y\in\oS} \sum_{e \in \sE_y} -\nu_{e}\nu_e^T \lambda_{e}\Upsilon_{\xi}(y) \frac{\nu_e}{|\nu_e|^2} \\
= &\frac{1}{2} \sum_{y\in\oS} \sum_{e \in \sE_y} \nu_e \nu_e^T\lambda_e\big(\Upsilon_{\xi}(\partial_+e)
		-\Upsilon_{\xi}(\partial_-e)\big)\frac{\nu_e}{|\nu_e|^2}
= \sum_{y\in\oS} \sum_{e \in \sE_y} A(e)\nabla\Upsilon_{\xi}(e).
\end{align*}
Also, $\pi(y) = 1/|\oS|$ for all $y \in \oS$ by symmetry of $L$. Then, from
\eqref{eqn:K}, 
\begin{align*}
K \xi = \frac{1}{2}\sum_{y\in\oS}\sum_{e \in \sE_y} \Big(\frac{\nu_e \nu_e^T }{|\oS|}\lambda_e - 2\nu_e \omega(y)^T \lambda_e\Big)\xi
= \sum_{y\in\oS}\sum_{e \in \sE_y} \nu_e \lambda_e\Big(\frac{\nu_e^T\xi}{2|\oS|} - \omega(y)^T\xi\Big) \\
= \sum_{y\in\oS}\sum_{e \in \sE_y} \nu_e\lambda_e\Big(\frac{\nu_e^T\xi}{2|\oS|} - \Upsilon_{\xi}(y)\Big)
= \sum_{y\in\oS} \sum_{e \in \sE_y}\left(-\nu_{e} \lambda_{e}\Upsilon_{\xi}(y) +
\frac{1}{2 |\oS|}\nu_e \nu_e^T \lambda_e\xi\right) \\
= \sum_{y\in\oS} \sum_{e \in \sE_y} A(e)(\nabla\Upsilon_{\xi}(e) + \frac{1}{|\oS|}\xi).
\end{align*}
\end{proof}

\section{Numerical experiments}\label{sect:numerics}
Our numerical computations of effective diffusivities described 
in this section are carried out by solving for $\pi$ in
\eqref{eqn:stat-dist}, solving a set of $d$ modified unit-cell problems
\eqref{eqn:unitcell2} with $\xi$ taking the values of the standard basis 
elements of $\real^d$, and then synthesizing $\omega$, the solution to the 
unit-cell problem \eqref{eqn:unitcell}. Once $\omega$ is computed, 
\eqref{eqn:K} is used to compute the effective diffusivity. In this paper, we do not 
investigate the details of the numerical fidelity of these procedures.  
Some details on the numerical methods used may be found in
\cite{preston2017homogenization}. 
  
Our main goal in this section will be to illustrate the effects of path
length,  approaches to modeling attraction, repulsion and bonding, as well as 
comparing our spatially discretized graph model to a continuous space and 
nonzero path length model. While it is possible to apply our method 
to obstructions with more elaborate shapes, for sake of brevity and in keeping 
with the above goal, we shall keep the same simple obstruction geometry
throughout this section.   

We fix a periodic obstruction geometry in two dimensions where a quarter of the periodic cell is obstructed. More precisely, the plane is periodically tiled by a unit square whose upper right quadrant is obstructed. Thus the obstructed region is given by
\begin{equation*}
  \mathcal{O} = \bigcup_{z \in \integ^2} \{z + [3/4,1] \times [3/4,1]\}.
\end{equation*}
Given this geometry, consider a randomly walking solute that undergoes specular reflections at the obstruction boundaries $\partial \mathcal{O}$. In the following subsections we explore different aspects of this random walk with the geometry fixed.

Throughout the following sections, let $(\sS_h,\sE_h,\lambda_h)$ denote a directed, weighted graph where $h > 0$ is a scaling parameter and
\begin{equation}\label{eqn:graph_sq_obs}
\begin{aligned}
	\sS_h &= h\integ^2 \backslash \mathcal{O}, \\
	\sE_h &= \{(x,y) \in \sS_h \times \sS_h \;|\; x - y = \pm(h,0) \text{ or } x - y = \pm(0,h)\}.
\end{aligned}
\end{equation}
That is, the node set excludes the obstructed region $\mathcal{O}$ and edges exist between each node and its nearest neighbors. The jump rate function $\lambda_h$ varies depending on the effects under investigation. Figure~\ref{fig:graph-pd-cell} depicts the quotiented node and edge sets of $\sS_h$ and $\sE_h$ for $h = 1/2, 1/4, 1/8$, and $1/16$.

\subsection{Nonzero path length effects}\label{sect:path-length-effects}
If the mean path length of the random walk is negligible relative to the spacing between obstructions, then one may approximate the random walk by a Brownian motion with reflections. In this case, the probability density function satisfies the diffusion equation with no flux boundary conditions on the obstruction boundaries. This will justify the use of PDE homogenization theory. However, if the mean path length is not sufficiently smaller than the obstruction spacing, the nonzero path length effects need to be investigated. We consider this case by examining a random walk on a subset of a lattice where the jump size is comparable to the obstruction spacing (see Figure~\ref{fig:graph-pd-cell}). We calculate the effective diffusivity coefficients for the graph in~\eqref{eqn:graph_sq_obs} with successively decreasing path (edge) lengths. Let
\begin{equation*}
	\lambda_h(e) = 1/h^2 \text{ for all } e \in \sE_h.
\end{equation*}
The obstructed area remains constant, but the path length is repeatedly
halved. The results in Figure~\ref{fig:pathlengtheffects} show that $D_e$
varies significantly for different $h$ and thus the choice of path length is
meaningful. Additionally, as $h \to 0+$, the effective diffusivity of the graph random walk
model appears to
converge to the effective diffusivity predicted by PDE homogenization. This
suggests a ``backdoor'' approach wherein one might approximate PDE homogenization via homogenizing a graph with a very fine mesh or path length.

\subsection{Modeling an interaction}\label{sect:interaction-effects}
This section explores the impact of an interaction (e.g., attraction, repulsion, or bonding) between the particle and obstruction boundaries on the effective diffusivity. We fix $h = 1/8$ and suppress dependence on $h$. The lower left graph in Figure~\ref{fig:pathlengtheffects} depicts a periodic cell of the graph of interest, $(\sS,\sE,\lambda)$. We choose various rate functions to model four different types of interactions between the random walker and the square obstruction. In the ``Neutral'' case, we simply set $\lambda(e) = 1/h^2$ for all $e \in \sE$.

Define the set of nodes bordering the obstructed region,
\begin{equation}\label{eqn:border-nodes}
\mathcal{B}_h = \{x \in \sS_h \;|\; x + y \in \mathcal{O} \text{ for some } ||y||_{\infty} < h\}.
\end{equation}
In the ``Bonding'' case, the jump rate function is given by
\begin{equation*}
	\lambda(x,y) =
	\begin{cases}
		 1/(2 h^2) & x \in \mathcal{B}_h \\
		 1/h^2 & \text{ otherwise.}
	\end{cases}
\end{equation*}
This rate function slows the random walker whenever it is near the boundary of an obstruction. In the ``Repulsion'' case,
\begin{equation*}
	\lambda(x,y) =
	\begin{cases}
		 2 / h^2 & x \in \mathcal{B}_h, y \notin \mathcal{B}_h \\
		 1/(2 h^2) & x \notin \mathcal{B}_h, y \in \mathcal{B}_h \\
		 1/h^2 & \text{ otherwise,}
	\end{cases}
\end{equation*}
which causes the random walker to be pushed away from the obstructions. In the ``Attraction'' case,
\begin{equation*}
	\lambda(x,y) =
	\begin{cases}
		 2 / h^2 & x \notin \mathcal{B}_h, y \in \mathcal{B}_h \\
		 1/(2 h^2) & x \in \mathcal{B}_h, y \notin \mathcal{B}_h \\
		 1 / h^2 & \text{ otherwise.}
	\end{cases}
\end{equation*}
This rate function pulls the random walker towards the obstructions.

The latter three rate functions illustrate how one might account for interactions between a particle and the obstructions. The effective diffusivity coefficients for each regime are shown in Figure~\ref{fig:drift-effects}. The Bonding rate function yielded the most significant reduction in diffusion. Notably, diffusion in the Attraction regime was faster than the Repulsion regime.
\begin{figure}
\centering
	\includegraphics[width=60mm]{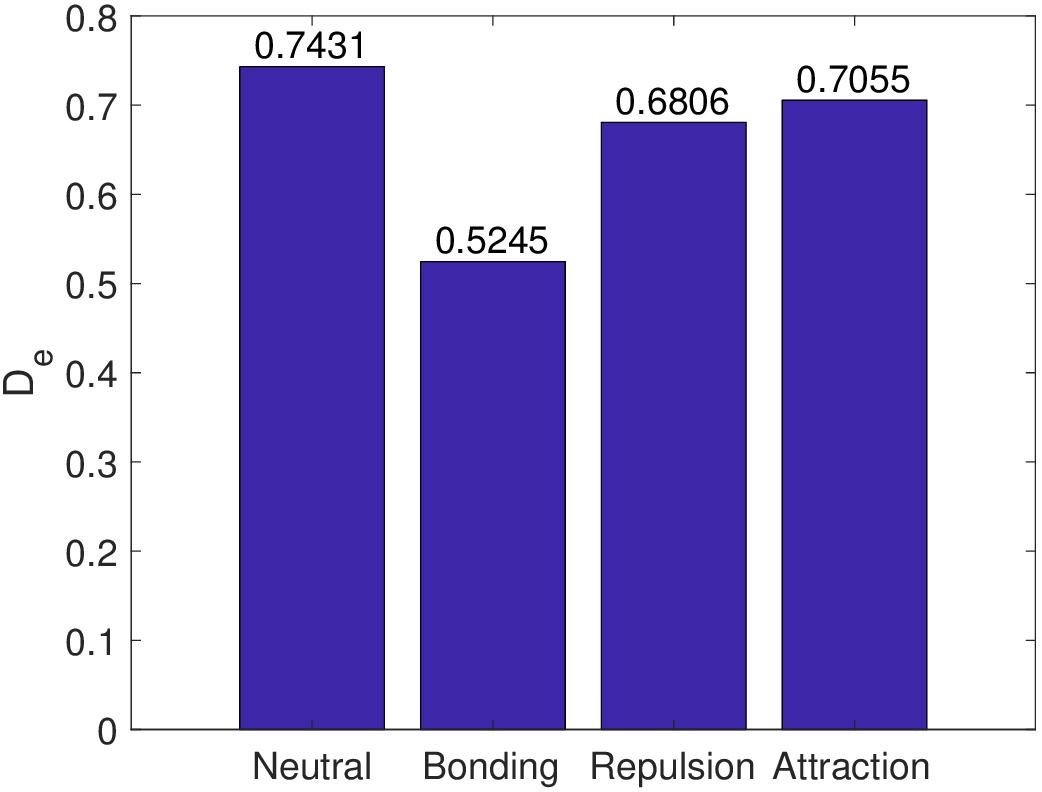}
	\caption{Effective diffusivity coefficients for graphs with nodes and edges given by~\eqref{eqn:graph_sq_obs} and various rate functions. The rates are chosen to model four settings: 1) no interaction, 2) a bonding/sticking effect near the obstruction, 3) an obstruction that repels the random walker, and 4) an obstruction that attracts the random walker.}
\label{fig:drift-effects}
\end{figure}

\subsection{Approximating continuous space}\label{sect:cont-space-comp}
Note that our graph framework confines the particle to discrete spatial locations, which may not reflect physical reality. A more accurate model would consider a random walk with finite path length and \emph{continuous state space} consisting of all points in the unobstructed region. The current graph model does not accommodate continuous space and thus we compare (via Monte Carlo simulations) the continuous space model's predictions with those of the graph model.

Define the process $X_h(t) \in \real^2 \backslash \mathcal{O}$ where $X_h$ waits at a location for time $h^2/4$ and then moves along a straight path of length $h$ in a uniformly chosen random direction. If the path intersects the obstructed region $\mathcal{O}$, then $X$ undergoes a specular (mirror-like) reflection to ensure its path does not intersect $\mathcal{O}$. Because $X_h(t)$ exhibits normal diffusion, the effective diffusivity and mean squared displacement $MSD(t)$ are related by
\begin{equation*}
	D_e = \lim_{t \rightarrow \infty}\frac{MSD(t)}{4t}.
\end{equation*}
We use Monte Carlo simulation to compute an estimate $\widetilde{MSD}(t)$ of $MSD(t)$ at equally spaced points in time $\{t_i\}_{i = 1}^N$. Let $\alpha$ denote the slope of the line of best fit (with 0 intercept) through the points $\big\{\big(t_i,\widetilde{MSD}(t_i)\big)\big\}_{i = 1}^N$. We then estimate $D_e$ as
\begin{equation*}
	D_e \approx \alpha/4
\end{equation*}
and calculate the 95\% confidence intervals appropriately.

We can approximate $X_h$ via the graph $(\sS_h,\sE_h,\lambda_h)$ where $\lambda_h(e) = 1/h^2$. We also introduce a slightly more sophisticated graph $(\widetilde{\sS}_h,\widetilde{\sE}_h,\widetilde{\lambda}_h)$ that accounts for jumps to diagonal neighbors and thus ought to better approximate $X_h$. Let
\begin{equation*}
\begin{aligned}
	\widetilde{\sS}_h &= \sS_h, \\
	\widetilde{\sE}_h &= \sE_h \cup \{(x,y) \in \widetilde{\sS}_h \times \widetilde{\sS}_h \;|\; x - y = \pm(h,h)\}.
\end{aligned}
\end{equation*}
The rate function $\widetilde{\lambda}_h$ should replicate the dynamics of $X_h$ reflecting off of an obstruction. Specifically, when the solute has an approximately diagonal collision with an obstruction, the solute will experience a non-negligible displacement (rather than returning to its original location). We therefore double the jump rates along edges that begin and end on the obstruction boundaries $\mathcal{B}_h$~\eqref{eqn:border-nodes}. Thus we define
\begin{equation*}
\widetilde{\lambda}_h(x,y) =
\begin{cases}
	1/h^2 & x,y \in \mathcal{B}_h \\
  1/(2h^2) & \text{otherwise.}
\end{cases}
\end{equation*}

The effective diffusivity coefficients (for $h = 2^{-2},2^{-3},\dots,2^{-8}$) of the continuous process $X_h$, the graph $(\sS_h,\sE_h,\lambda_h)$, and the graph $(\widetilde{\sS}_h,\widetilde{\sE}_h,\widetilde{\lambda}_h)$ are shown in Figure~\ref{fig:cont-disc-discdiag}. Incorporating diagonal jumps and thoughtful jump rates in $(\widetilde{\sS}_h,\widetilde{\sE}_h,\widetilde{\lambda}_h)$ yields a superior approximation of the continuous setting. In all three settings, $D_e$ appears to converge to the same value as $h \rightarrow 0$.
\begin{figure}
\centering
	\includegraphics[width=60mm]{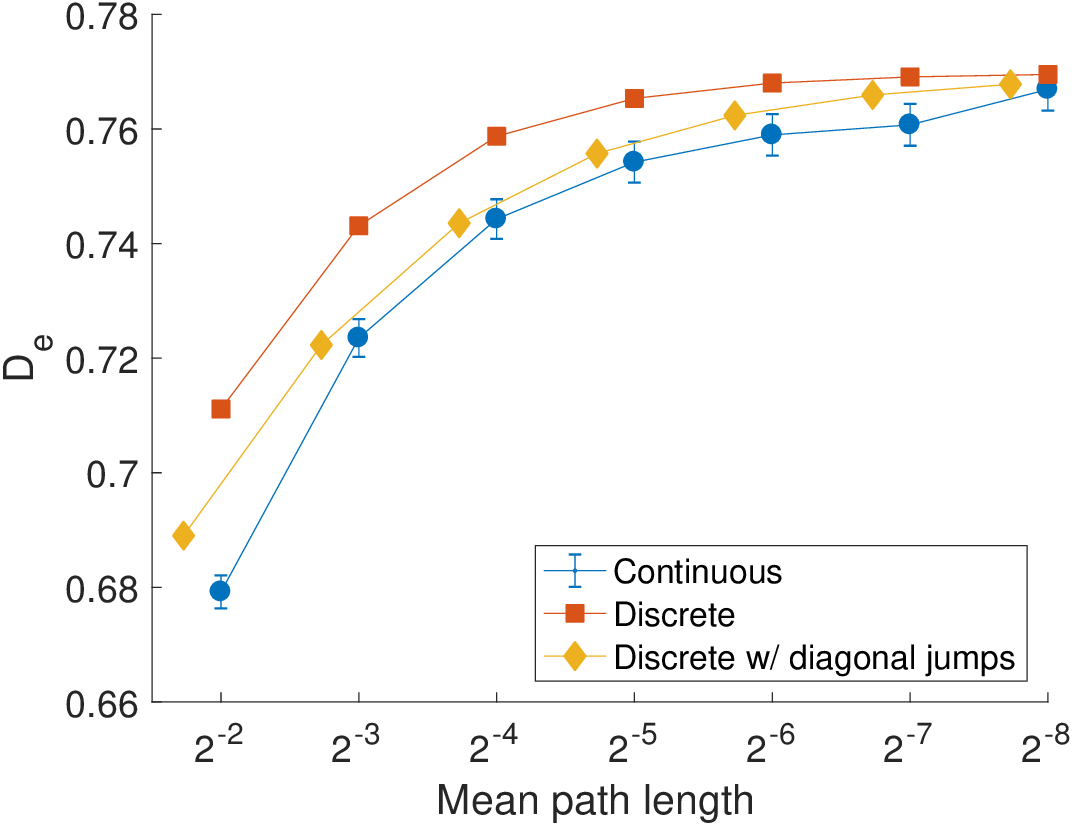}
	\caption{Effective diffusivity coefficients for the continuous process $X_h$ (``Continuous''), the graph $(\sS_h,\sE_h,\lambda_h)$ (``Discrete''), and the graph $(\widetilde{\sS}_h,\widetilde{\sE}_h,\widetilde{\lambda}_h)$ (``Discrete w/ diagonal jumps'') for $h = 2^{-2},2^{-3},\dots,2^{-8}$.}
\label{fig:cont-disc-discdiag}
\end{figure}

\appendix
\section{Validity of the formal result}
In this section we show that the diffusivity derived by the formal
asymptotics is correct. That is, $K = C$ (see (\ref{eqn:K}), (\ref{eqn:C}),
and Theorem~\ref{thm:deff_rigorous}). Noting that the diffusivity 
matrix $K$ was derived under the null-drift condition \eqref{eq:null-drift}, we proceed by 
assuming it. Thus $\tilde{\rho}=\rho$. First, we show two auxiliary results.
\begin{lemma}\label{lem:omega-g}
Let $\omega$ be a solution to the unit-cell problem~(\ref{eqn:unitcell}) and $\psi$ satisfy (\ref{eqn:Lg_fd}). Then
\begin{equation*}
\begin{aligned}
\sum_{y \in \oS}\sum_{e \in \E_y} \nu_e \omega(y)^T \lambda_e
&= \sum_{y \in \oS} \sum_{e \in \E_y} \psi(\partial_+e) \nu_e^T \lambda_e \pi(y),\\
\sum_{y \in \oS}\sum_{e \in \E_y} \omega(y) \nu_e^T \lambda_e
&= \sum_{y \in \oS} \sum_{e \in \E_y} \nu_e \psi(\partial_+e)^T \lambda_e \pi(y).
\end{aligned}
\end{equation*}
\end{lemma}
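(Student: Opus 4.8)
The plan is to recognize both displayed identities as two evaluations of the same matrix-valued pairing, obtained by sliding $L$ across it via $L^T$. For $\real^d$-valued node functions $F,G\in(\real^d)^{\oS}$ write $\langle F,G\rangle \coloneqq \sum_{y\in\oS}F(y)G(y)^T\in\real^{d\times d}$. Since $L$ acts on scalar functions as the matrix $(Lf)(y)=\sum_{z}L(y,z)f(z)$ and $L^T(y,z)=L(z,y)$, a one-line interchange of summations gives the adjoint identity $\langle LF,G\rangle=\langle F,L^TG\rangle$ for all such $F,G$. Combined with $L\psi=\rho$ from \eqref{eqn:Lg_fd} and $L^T\omega=\sigma$ from \eqref{eqn:unitcell}, this yields the core equality $\langle\rho,\omega\rangle=\langle L\psi,\omega\rangle=\langle\psi,L^T\omega\rangle=\langle\psi,\sigma\rangle$.

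It then remains to identify the two sides of the first identity with $\langle\rho,\omega\rangle$ and $\langle\psi,\sigma\rangle$ respectively. For the left side, since $\partial_-e=y$ for $e\in\sE_y$, the inner sum collapses by the definition \eqref{eqn:drift-field} of $\rho$: $\sum_{e\in\sE_y}\nu_e\lambda_e\,\omega(y)^T=\rho(y)\omega(y)^T$, so the left side equals $\langle\rho,\omega\rangle$. For the right side, using $\partial_-e=y$ and $\partial_+e=y+\nu_e$ for $e\in\sE_y$, the summand $\psi(\partial_+e)\nu_e^T\lambda_e\pi(\partial_-e)$ is a periodic edge function (each of $\psi$, $\nu$, $\lambda$, $\pi$ is invariant under integer translations), so \eqref{eqn:sum-of-sums} lets us replace the sum over $\bigcup_y\sE_y$ by the sum over $\bigcup_y\sE'_y$; regrouping by terminal node, where now $\partial_+e=y$ and $\partial_-e=y-\nu_e$, and invoking the definition \eqref{eqn:sigma} of $\sigma$ turns the right side into $\sum_{y}\psi(y)\sigma(y)^T=\langle\psi,\sigma\rangle$. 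Finally, the second identity is the entrywise transpose of the first: its left side is $\langle\rho,\omega\rangle^T$ and its right side is $\langle\psi,\sigma\rangle^T$, so it follows immediately.

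There is no substantive obstacle here; the only care needed is the edge bookkeeping — keeping straight which endpoint is $\partial_-e$ and which is $\partial_+e$ as one passes between $\sE_y$ and $\sE'_y$ — and checking that the summand to which \eqref{eqn:sum-of-sums} is applied is genuinely periodic so that the identity applies. One could equivalently run the argument componentwise using the scalar pairing $(f,g)=\sum_y f(y)g(y)$ and the scalar relation $(Lf,g)=(f,L^Tg)$ (the same relation already implicit in the paper via $R(L)^\perp=N(L^T)$), but the matrix pairing keeps the two stated identities visibly parallel and makes the transpose remark transparent.
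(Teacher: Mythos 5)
Your proof is correct and follows essentially the same route as the paper: the paper also writes the left side as the pairing of $\rho=L\psi$ with $\omega$, slides $L$ across to get $(\psi,L^T\omega)=(\psi,\sigma)$, and then unpacks $\sigma$ via the $\sE_y\leftrightarrow\sE'_y$ identity \eqref{eqn:sum-of-sums}, working componentwise with $(\rho_i,\omega_j)$ where you use the matrix pairing. The only difference is notational, and your transpose remark for the second identity matches the paper's.
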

\begin{proof}
We note that the second equation is simply the transpose of the first.
The result follows easily:
\begin{equation*}
\begin{aligned}
\sum_{y \in \oS}\sum_{e \in \E_y} \nu_e^i \omega_j(y) \lambda_e
= \big( \rho_i, \omega_j \big)
= ( L\psi_i, \omega_j )
= ( \psi_i, L^T\omega_j )
= ( \psi_i, \sigma_j ) \\
= \sum_{y \in \oS} \psi_i(y) \sum_{e \in \E'_y} \nu_e^j \pi(\partial_-e) \lambda_e
= \sum_{y \in \oS} \sum_{e \in \E'_y} \psi_i(\partial_+e) \nu_e^j \pi(\partial_-e) \lambda_e \\
= \sum_{y \in \oS} \sum_{e \in \E_y} \psi_i(\partial_+e) \nu_e^j \lambda_e \pi(y).
\end{aligned}
\end{equation*}
Here, we used superscripts and subscripts as convenient to denote
a component of a vector valued function.
\end{proof}
\begin{lemma}\label{lem:f-g-relationship}
The drift field $\rho$ and $\psi$ (see (\ref{eqn:Lg_fd})) satisfy:
\begin{equation}
\begin{aligned}
&\sum_{y \in \oS} \sum_{e \in \sE_y} \nu_e \psi(\partial_-e)^T \lambda_e \pi(y)
= \sum_{y \in \oS} \rho(y) \psi(y)^T \pi(y),\\
&\sum_{y \in \oS} \sum_{e \in \sE_y} \psi(\partial_-e) \nu_e^T \lambda_e \pi(y)
= \sum_{y \in \oS} \psi(y) \rho(y)^T \pi(y),\\
&\sum_{y \in \oS} \sum_{e \in \sE_y}
(\psi(\partial_+e)-\psi(\partial_-e)) \psi(\partial_-e))^T\lambda_e \pi(y) =\sum_{y \in \oS} \psi(y) \rho(y)^T \pi(y),\\
&\sum_{y \in \oS} \sum_{e \in \sE_y} (\psi(\partial_+e)-\psi(\partial_-e)) (\psi(\partial_+e) - \psi(\partial_-e))^T\lambda_e \pi(y)\\
&\quad = -\sum_{y \in \oS} \psi(y) \rho(y)^T \pi(y) - \sum_{y \in \oS} \rho(y) \psi(y)^T \pi(y).
\end{aligned}
\end{equation}
\end{lemma}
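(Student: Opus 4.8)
The plan is to verify the four identities one at a time. The first three are immediate: on $\sE_y$ one has $\partial_-e=y$, so $\psi(\partial_-e)=\psi(y)$ may be pulled out of the inner sum, and what remains is read off from the definitions. For the first identity, the leftover inner sum is $\sum_{e\in\sE_y}\nu_e\lambda_e=\rho(y)$ by~\eqref{eqn:drift-field}, giving $\sum_{y\in\oS}\rho(y)\psi(y)^T\pi(y)$; the second identity is its transpose; and for the third, the leftover inner sum is $\sum_{e\in\sE_y}(\psi(\partial_+e)-\psi(\partial_-e))\lambda_e=(L\psi)(y)=\rho(y)$ by~\eqref{eqn:L-op} and~\eqref{eqn:Lg_fd}, producing $\sum_{y\in\oS}\psi(y)\rho(y)^T\pi(y)$.

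The fourth identity is the substantive one, and I would prove it by expanding the outer product. Writing $\Delta_e:=\psi(\partial_+e)-\psi(\partial_-e)$,
\[
\Delta_e\Delta_e^T=\psi(\partial_+e)\psi(\partial_+e)^T-\psi(\partial_+e)\psi(\partial_-e)^T-\psi(\partial_-e)\psi(\partial_+e)^T+\psi(\partial_-e)\psi(\partial_-e)^T,
\]
and I would evaluate the four resulting double sums $\sum_{y\in\oS}\sum_{e\in\sE_y}(\,\cdot\,)\lambda_e\pi(y)$ separately. Using $\partial_-e=y$ and the definition of $\lambda^0$, the fourth term contributes $\sum_{y\in\oS}\lambda^0(y)\pi(y)\psi(y)\psi(y)^T$. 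For the first term I would note that $e\mapsto\lambda_e\pi(\partial_-e)\psi(\partial_+e)\psi(\partial_+e)^T$ is a periodic edge function, so~\eqref{eqn:sum-of-sums} lets me rewrite $\sum_{y\in\oS}\sum_{e\in\sE_y}$ as $\sum_{y\in\oS}\sum_{e\in\sE'_y}$; on $\sE'_y$ one has $\partial_+e=y$ and $\partial_-e=y-\nu_e$, so this term becomes $\sum_{y\in\oS}\psi(y)\psi(y)^T\sum_{e\in\sE'_y}\lambda_e\pi(\partial_-e)$, and the stationarity $(L^T\pi)(y)=0$ (see~\eqref{eqn:stat-dist}), expanded via~\eqref{eqn:Lt-op}, gives $\sum_{e\in\sE'_y}\lambda_e\pi(\partial_-e)=\lambda^0(y)\pi(y)$; hence the first term also equals $\sum_{y\in\oS}\lambda^0(y)\pi(y)\psi(y)\psi(y)^T$.

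For the two cross terms I would use $\partial_-e=y$ together with the rearrangement $\sum_{e\in\sE_y}\psi(\partial_+e)\lambda_e=(L\psi)(y)+\lambda^0(y)\psi(y)=\rho(y)+\lambda^0(y)\psi(y)$: the $-\psi(\partial_+e)\psi(\partial_-e)^T$ sum becomes $-\sum_{y\in\oS}\pi(y)\big(\rho(y)+\lambda^0(y)\psi(y)\big)\psi(y)^T$, and the $-\psi(\partial_-e)\psi(\partial_+e)^T$ sum is its transpose. Adding the four contributions, the $\lambda^0$-weighted pieces occur with signs $+,-,-,+$ and cancel, leaving exactly $-\sum_{y\in\oS}\psi(y)\rho(y)^T\pi(y)-\sum_{y\in\oS}\rho(y)\psi(y)^T\pi(y)$, which is the claimed right-hand side.

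I expect the only step with any real content to be the treatment of the first term of the expansion, where one must pass from a sum over outgoing edges to a sum over incoming edges via~\eqref{eqn:sum-of-sums} and then invoke stationarity of $\pi$ in the form $\sum_{e\in\sE'_y}\lambda_e\pi(\partial_-e)=\lambda^0(y)\pi(y)$. Everything else is bookkeeping of signs and transposes; the main thing to be careful about is confirming that the four $\lambda^0$-weighted terms really do cancel rather than, say, doubling.
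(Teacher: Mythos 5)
Your argument is correct and rests on the same two ingredients as the paper's proof: the re-indexing identity \eqref{eqn:sum-of-sums} to pass from outgoing to incoming edge sums, and the stationarity relation $(L^T\pi)(y)=0$; the only difference is organizational, in that the paper first establishes the auxiliary identity $\sum_{y\in\oS}\sum_{e\in\sE_y}(\psi(\partial_+e)-\psi(\partial_-e))\psi(\partial_+e)^T\lambda_e\pi(y)=-\sum_{y\in\oS}\psi(y)\rho(y)^T\pi(y)$ (by the telescoping combination $\Delta_e\psi(\partial_+e)^T+\psi(\partial_-e)\Delta_e^T=\psi(\partial_+e)\psi(\partial_+e)^T-\psi(\partial_-e)\psi(\partial_-e)^T$) and then subtracts the third identity, whereas you expand the full outer product $\Delta_e\Delta_e^T$ into four rank-one pieces and verify that the $\lambda^0$-weighted contributions cancel with signs $+,-,-,+$. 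One small caveat: pulling $\psi(\partial_-e)^T=\psi(y)^T$ out of the third identity's left-hand side actually yields $\sum_{y\in\oS}\rho(y)\psi(y)^T\pi(y)$, the \emph{transpose} of the displayed right-hand side --- an apparent typo in the lemma statement that your sketch reproduces rather than catches; this is harmless for the fourth identity (whose right-hand side is symmetric under transposition) and for the use of the lemma in Appendix A, but is worth noting.
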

\begin{proof}
The first two equalities which are transposes of each other follow easily
using the definition of $\rho$. The third equality also follows easily using
the relationship $L \psi = \rho$.
To show the fourth equality,  notice that
\begin{equation*}
\begin{aligned}
&  \sum_{y \in \oS} \sum_{e \in \sE_y} (\psi(\partial_+e)-\psi(\partial_-e)) \psi(\partial_+e)^T\lambda_e\pi(y)
	+ \sum_{y \in \oS} \psi(y) \rho(y)^T \pi(y)  \\
=&\; \sum_{y \in \oS} \sum_{e \in \sE_y} (\psi(\partial_+e)-\psi(\partial_-e)) \psi(\partial_+e)^T \lambda_e\pi(y)
	+ \sum_{y \in \oS} \psi(y) (L\psi)(y)^T \pi(y)  \\
=&\; \sum_{y \in \oS} \sum_{e \in \sE_y} \psi(\partial_+e)\psi(\partial_+e)^T \lambda_e\pi(\partial_-e)
	- \sum_{y \in \oS} \sum_{e \in \sE_y}\psi(\partial_-e)\psi(\partial_-e)^T \lambda_e\pi(\partial_-e) \\
=&\; \sum_{y \in \oS} \sum_{e \in \sE_y'} \psi(\partial_+e)\psi(\partial_+e)^T \lambda_e\pi(\partial_-e)
	- \sum_{y \in \oS} \psi(y)\psi(y)^T\pi(y)\sum_{e \in \sE_y} \lambda_e \\
=&\; \sum_{y \in \oS} \psi(y)\psi(y)^T \Big(\sum_{e \in \sE_y'} \pi(y-\nu_e)\lambda_e
- \pi(y)\lambda^0(y)\Big) = \sum_{y \in \oS} \psi(y)\psi(y)^T (L^T \pi)(y)
	= 0,
	\end{aligned}
\end{equation*}
where we have used the relation $L \psi = \rho$, skipped some steps of the
algebra and used \eqref{eqn:sum-of-sums}.
Combine this equality with the third equality to obtain the fourth equality.
\end{proof}

\begin{theorem}
The effective diffusivities derived from the formal and rigorous derivation are equal. That is, $K = C$ where $K$ and $C$ are defined in~(\ref{eqn:K}) and~(\ref{eqn:C}), respectively.
\end{theorem}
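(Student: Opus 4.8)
The plan is to expand $2C$ and $2K$ into the same collection of elementary sums indexed by $\oE$ and then match them term by term, using the auxiliary identities of Lemmas~\ref{lem:omega-g} and~\ref{lem:f-g-relationship} to eliminate $\omega$ in favor of $\psi$ and $\rho$.

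Writing $\beta_e \coloneqq \psi(\partial_+e) - \psi(\partial_-e)$, so that $\alpha_e = \nu_e - \beta_e$, the definition~(\ref{eqn:C}) gives
\begin{equation*}
2C = \sum_{e \in \oE}\big(\nu_e\nu_e^T - \nu_e\beta_e^T - \beta_e\nu_e^T + \beta_e\beta_e^T\big)\lambda_e\,\pi(\partial_-e),
\end{equation*}
while, using the identity~(\ref{eqn:sum-of-sums}) together with $\pi(y) = \pi(\partial_-e)$ for $e \in \sE_y$, the definition~(\ref{eqn:K}) becomes
\begin{equation*}
2K = \sum_{e \in \oE}\nu_e\nu_e^T\lambda_e\,\pi(\partial_-e) \;-\; \sum_{y\in\oS}\sum_{e\in\sE_y}\big(\nu_e\omega(y)^T + \omega(y)\nu_e^T\big)\lambda_e.
\end{equation*}
The $\nu_e\nu_e^T$ terms already agree, so the task reduces to showing that the $\omega$-sum in $2K$ equals $\sum_{e\in\oE}\big(-\nu_e\beta_e^T - \beta_e\nu_e^T + \beta_e\beta_e^T\big)\lambda_e\,\pi(\partial_-e)$.

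Next I would apply Lemma~\ref{lem:omega-g} to rewrite $\sum_{y}\sum_{e\in\sE_y}\nu_e\omega(y)^T\lambda_e$ as $\sum_{y}\sum_{e\in\sE_y}\psi(\partial_+e)\nu_e^T\lambda_e\pi(y)$, and similarly for its transpose. Splitting $\psi(\partial_+e) = \psi(\partial_-e) + \beta_e$ separates each of these into a part involving $\psi(\partial_-e)$ and a part involving $\beta_e$. The first two equalities of Lemma~\ref{lem:f-g-relationship} identify the $\psi(\partial_-e)$-parts with $\sum_y \psi(y)\rho(y)^T\pi(y)$ and $\sum_y\rho(y)\psi(y)^T\pi(y)$, and the fourth equality of that lemma converts the $\beta_e\beta_e^T$ term on the $C$ side, namely $\sum_{y}\sum_{e\in\sE_y}\beta_e\beta_e^T\lambda_e\pi(y)$, into $-\sum_y\psi(y)\rho(y)^T\pi(y) - \sum_y\rho(y)\psi(y)^T\pi(y)$. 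After these substitutions, the $\psi\rho^T$ and $\rho\psi^T$ contributions appear with identical signs on the two sides, and the remaining terms on both sides reduce (via~(\ref{eqn:sum-of-sums})) to $-\sum_{e\in\oE}\big(\nu_e\beta_e^T + \beta_e\nu_e^T\big)\lambda_e\,\pi(\partial_-e)$. This establishes $2K = 2C$, hence $K = C$.

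I do not expect a genuine obstacle: given Lemmas~\ref{lem:omega-g} and~\ref{lem:f-g-relationship}, the result is essentially organized bookkeeping. The one place that demands care is the handling of transposes — the matrices $\sum\nu_e\omega(y)^T\lambda_e$ and $\sum\psi(\partial_+e)\nu_e^T\lambda_e\pi(y)$ are not symmetric, so the $\nu_e\beta_e^T$ and $\beta_e\nu_e^T$ pieces must be tracked separately and each half of the auxiliary lemmas must be applied to the matching half of the expansion; a sign slip there is the only realistic pitfall.
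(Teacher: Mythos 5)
Your proposal is correct and follows essentially the same route as the paper: expand $C$ via $\alpha_e = \nu_e - (\psi(\partial_+e)-\psi(\partial_-e))$, reduce the claim to an identity between the $\omega$-sums in $K$ and the $\psi$-sums in $C$, and verify it with Lemmas~\ref{lem:omega-g} and~\ref{lem:f-g-relationship}. In fact you spell out the final bookkeeping (splitting $\psi(\partial_+e)=\psi(\partial_-e)+\beta_e$ and cancelling the $\psi\rho^T$ and $\rho\psi^T$ terms) more explicitly than the paper, which leaves that step to the reader.
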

\begin{proof}
Expanding the definition of $C$~(\ref{eqn:C}) using \eqref{eqn:alpha}
and comparing with the definition of $K$~\eqref{eqn:K} we see that
$C=K$ if and only if
\begin{equation*}
\begin{aligned}
 &\sum_{y\in\oS} \sum_{e \in \sE_y} \nu_e \omega(y)^T\lambda_e + \omega(y)\nu_e^T\lambda_e \\
=&\;\sum_{y \in \oS} \sum_{e \in \sE_y} \nu_e \big(\psi(\partial_+e) - \psi(\partial_-e)\big)^T \lambda_e \pi(\partial_-e) \\
	&+ \sum_{y \in \oS} \sum_{e \in \sE_y} \big(\psi(\partial_+e) - \psi(\partial_-e)\big) \nu_e^T \lambda_e \pi(\partial_-e) \\
	&-\sum_{y \in \oS} \sum_{e \in \sE_y} (\psi(\partial_+e)-\psi(\partial_-e))(\psi(\partial_+e)-\psi(\partial_-e))^T\lambda_e\pi(\partial_-e).
\end{aligned}
\end{equation*}
It is not difficult to verify this equality using Lemmas~\ref{lem:omega-g} and~\ref{lem:f-g-relationship}.
\end{proof}



\section*{Acknowledgments}
We thank the two anonymous reviewers for their constructive comments 
which helped improve this manuscript.

\bibliographystyle{siamplain}
\bibliography{references}

\begin{thebibliography}{10}

\bibitem{ball2006asymptotic}
{\sc K.~Ball, T.~G. Kurtz, L.~Popovic, G.~Rempala, et~al.}, {\em Asymptotic
  analysis of multiscale approximations to reaction networks}, The Annals of
  Applied Probability, 16 (2006), pp.~1925--1961.

\bibitem{bensoussan2011asymptotic}
{\sc A.~Bensoussan, J.-L. Lions, and G.~Papanicolaou}, {\em Asymptotic analysis
  for periodic structures}, vol.~374, American Mathematical Soc., 2011.

\bibitem{berger2007quenched}
{\sc N.~Berger and M.~Biskup}, {\em Quenched invariance principle for simple
  random walk on percolation clusters}, Probability theory and related fields,
  137 (2007), pp.~83--120.

\bibitem{bhattacharya1982functional}
{\sc R.~N. Bhattacharya}, {\em On the functional central limit theorem and the
  law of the iterated logarithm for markov processes}, Zeitschrift f{\"u}r
  Wahrscheinlichkeitstheorie und verwandte Gebiete, 60 (1982), pp.~185--201.

\bibitem{billingsley2011convergence}
{\sc P.~Billingsley}, {\em Convergence of probability measures}, Wiley, 2011.

\bibitem{biskup2011recent}
{\sc M.~Biskup}, {\em Recent progress on the random conductance model},
  Probability Surveys, 8 (2011), pp.~294--373,
  \url{https://doi.org/10.1214/11-ps190}.

\bibitem{bremaud2013markov}
{\sc P.~Br{\'e}maud}, {\em Markov chains: Gibbs fields, Monte Carlo simulation,
  and queues}, vol.~31, Springer Science \& Business Media, 2013.

\bibitem{caputo2003finite}
{\sc P.~Caputo and D.~Ioffe}, {\em Finite volume approximation of the effective
  diffusion matrix: the case of independent bond disorder}, in Annales de
  l'Institut Henri Poincare (B) Probability and Statistics, vol.~39, Elsevier,
  2003, pp.~505--525.

\bibitem{desbrun2005discrete}
{\sc M.~Desbrun, A.~N. Hirani, M.~Leok, and J.~E. Marsden}, {\em Discrete
  exterior calculus}, arXiv preprint math/0508341,  (2005).

\bibitem{preston2017homogenization}
{\sc P.~Donovan}, {\em Homogenization theory for solute motion accounting for
  obstructions, interactions, and path length effects}, PhD thesis, University
  of Maryland Baltimore County, 2017.

\bibitem{donovan2016homogenization}
{\sc P.~Donovan, Y.~Chehreghanianzabi, M.~Rathinam, and S.~P. Zustiak}, {\em
  Homogenization theory for the prediction of obstructed solute diffusivity in
  macromolecular solutions}, Plos One, 11 (2016),
  \url{https://doi.org/10.1371/journal.pone.0146093}.

\bibitem{egloffe2014random}
{\sc A.-C. Egloffe, A.~Gloria, J.-C. Mourrat, and T.~N. Nguyen}, {\em Random
  walk in random environment, corrector equation and homogenized coefficients:
  from theory to numerics, back and forth}, IMA journal of numerical analysis,
  35 (2014), pp.~499--545.

\bibitem{ethier2005markov}
{\sc S.~N. Ethier and T.~G. Kurtz}, {\em Markov processes characterization and
  convergence}, Wiley-Interscience, 2005.

\bibitem{faggionato2008random}
{\sc A.~Faggionato}, {\em Random walks and exclusion processes among random
  conductances on random infinite clusters: homogenization and hydrodynamic
  limit}, Electronic Journal of Probability, 13 (2008), pp.~2217--2247,
  \url{https://doi.org/10.1214/ejp.v13-591}.

\bibitem{friedman2004calculus}
{\sc J.~Friedman and J.-P. Tillich}, {\em Calculus on graphs}, arXiv preprint
  cs/0408028,  (2004).

\bibitem{gloria2016quantitative}
{\sc A.~Gloria and J.~Nolen}, {\em A quantitative central limit theorem for the
  effective conductance on the discrete torus}, Communications on Pure and
  Applied Mathematics, 69 (2016), pp.~2304--2348.

\bibitem{gloria2011optimal}
{\sc A.~Gloria, F.~Otto, et~al.}, {\em An optimal variance estimate in
  stochastic homogenization of discrete elliptic equations}, The annals of
  probability, 39 (2011), pp.~779--856.

\bibitem{grady2010discrete}
{\sc L.~J. Grady and J.~R. Polimeni}, {\em Discrete calculus: Applied analysis
  on graphs for computational science}, Springer Science \& Business Media,
  2010.

\bibitem{hirani2003discrete}
{\sc A.~N. Hirani}, {\em Discrete exterior calculus}, PhD thesis, California
  Institute of Technology, 2003.

\bibitem{kang2013separation}
{\sc H.-W. Kang, T.~G. Kurtz, et~al.}, {\em Separation of time-scales and model
  reduction for stochastic reaction networks}, The Annals of Applied
  Probability, 23 (2013), pp.~529--583.

\bibitem{kang2014central}
{\sc H.-W. Kang, T.~G. Kurtz, L.~Popovic, et~al.}, {\em Central limit theorems
  and diffusion approximations for multiscale markov chain models}, The Annals
  of Applied Probability, 24 (2014), pp.~721--759.

\bibitem{keller1977effective}
{\sc J.~B. Keller}, {\em Effective behavior of heterogeneous media}, in
  Statistical mechanics and statistical methods in theory and application,
  Springer, 1977, pp.~631--644.

\bibitem{kipnis1986central}
{\sc C.~Kipnis and S.~S. Varadhan}, {\em Central limit theorem for additive
  functionals of reversible markov processes and applications to simple
  exclusions}, Communications in Mathematical Physics, 104 (1986), pp.~1--19.

\bibitem{klebaner2012introduction}
{\sc F.~C. Klebaner}, {\em Introduction to stochastic calculus with
  applications}, Imperial College Press, 2012.

\bibitem{kozlov1987averaging}
{\sc S.~Kozlov}, {\em Averaging of difference schemes}, Mathematics of the
  USSR-Sbornik, 57 (1987), p.~351.

\bibitem{kozlov1980homogenization}
{\sc S.~Kozlov, O.~Olenik, and V.~Zhikov}, {\em Homogenization of differential
  operators}, Russian Mathem. Surveys, 35 (1980), p.~254.

\bibitem{kunnemann1983diffusion}
{\sc R.~K{\"u}nnemann}, {\em The diffusion limit for reversible jump processes
  onz d with ergodic random bond conductivities}, Communications in
  Mathematical Physics, 90 (1983), pp.~27--68.

\bibitem{mathieu2008quenched}
{\sc P.~Mathieu}, {\em Quenched invariance principles for random walks with
  random conductances}, Journal of Statistical Physics, 130 (2008),
  pp.~1025--1046.

\bibitem{owhadi2003approximation}
{\sc H.~Owhadi}, {\em Approximation of the effective conductivity of ergodic
  media by periodization}, Probability theory and related fields, 125 (2003),
  pp.~225--258.

\bibitem{zhou2005learning}
{\sc D.~Zhou, J.~Huang, and B.~Sch{\"o}lkopf}, {\em Learning from labeled and
  unlabeled data on a directed graph}, in Proceedings of the 22nd international
  conference on Machine learning, ACM, 2005, pp.~1036--1043.

\end{thebibliography}
\end{document}